\newcommand{\co}{{\operatorname{co}}}
\newcommand{\Map}{{\operatorname{Map}}}
\newcommand{\Ker}{{\operatorname{Ker}}}
\newcommand{\Img}{{\operatorname{Im}}}
\newcommand{\spann}{{\operatorname{Span}}}
\newcommand{\Alt}{{\operatorname{Alt}}}
\newcommand{\Z}{\mathbb{Z}}
\newcommand{\Ext}{{\operatorname{Ext}}}
\newcommand{\Der}{{\operatorname{Der}}}
\newcommand{\id}{{\operatorname{id}}}
\newcommand{\Det}{{\operatorname{Det}}}
\newcommand{\Hom}{{\operatorname{Hom}}}
\newcommand{\Tot}{{\operatorname{Tot}}}
\newcommand{\Opext}{{\operatorname{Opext}}}
\newcommand{\vect}[1]{\boldsymbol{#1}}
\newtheorem{theorem}{Theorem}[section]
\newtheorem{proposition}[theorem]{Proposition}
\newtheorem{lemma}[theorem]{Lemma}
\newtheorem{corollary}[theorem]{Corollary}
\theoremstyle{definition}
\newtheorem{definition}[theorem]{Definition}
\newtheorem{example}[theorem]{Example}
\theoremstyle{remark}
\newtheorem{remark}[theorem]{Remark}
\numberwithin{equation}{section}
\begin{document}

\title{five-term exact sequence for Kac cohomology}
\author{C\'esar Galindo}
\address{Departamento de Matem\'aticas\\ Universidad de los Andes\\ Carrera 1 N.
18A - 10,  Bogot\'a, Colombia}
\email{cn.galindo1116@uniandes.edu.co}

\author{Yiby Morales}
\address{Departamento de Matem\'aticas\\ Universidad de los Andes\\ Carrera 1 N.
18A - 10,  Bogot\'a, Colombia}
\email{yk.morales964@uniandes.edu.co}

\thanks{C.G. is partially supported by Faculty of Science of  Universidad de los Andes, Convocatoria 2018-2019 para la Financiaci\'on de Programas de Investigaci\'on, programa ''Simetr\'{i}a $T$ (inversi\'on temporal) en
categor\'{i}as de fusi\'{o}n y modulares''.}

\dedicatory{Dedicated to Nicol\'as Andruskiewitsch on the occasion of his 60th birthday}
\begin{abstract}
We use relative group cohomologies to compute the Kac cohomology of matched pairs of finite groups. This cohomology naturally appears in the theory of abelian extensions of finite dimensional Hopf algebras. We prove that Kac cohomology can be computed using relative cohomology and relatively projective resolutions. This allows us to use other resolutions, besides the bar resolution, for computations. We compute, in terms of relative cohomology, the first two pages of a spectral sequence which converges to the Kac cohomology and its associated five-term exact sequence. Through several examples, we show the usefulness of the five-term exact sequence in computing groups of abelian extensions.
\end{abstract}
\maketitle

\section{Introduction}

Extension theory of groups plays a significant role in the construction and classification of finite groups. In the same way, the extension theory of Hopf algebras has led to results on the still wide open problem of construction and classification of finite-dimensional semisimple Hopf algebras, \cite{Kashina,Masuoka-p3,Natale1,Natale2,Natale-pqr}. The set of equivalence classes of extensions of a group $G$ by a $G$-module $M$ is an abelian group with the Baer product of extensions, which is isomorphic to the second cohomology group $H^2(G,M)$.  A generalization of this theory to Hopf algebras is obtained for the so-called \emph{abelian extensions}, that is,  cleft Hopf algebra extensions of a commutative Hopf algebra $K$ by a cocommutative Hopf algebra  $H$, see \cite{Hofstetter,Kac,Masuoka-Calculations,masuoka-notas,Masuoka-TAMS,Singer}. 

In this paper, we deal with  $H = kF$, a group algebra, and $K = k^G$, the dual of such an algebra, where $F$ and $G$ are finite groups. In this case, each abelian extension has an associated matched pair, that is, a larger group $\Sigma$ such that $G$ and $F$ are subgroups of $\Sigma$ satisfying $G \cap F=\{e\}$ and $\Sigma = GF$. The set of equivalence classes of abelian extensions associated to a fixed matched pair, denoted by $\Opext(kF,k^G)$, is an abelian group that can be computed as the second total cohomology of certain double complex whose cohomology is called \emph{Kac cohomology}.


Obtaining a computation for the $\Opext(kF,k^G)$ of a matched pair of groups can be quite difficult. In fact, there are few general computations in the literature \cite{Masuoka-Calculations}. One obstacle for the computation of  $\Opext(kF,k^G)$ comes from the fact that it is defined as the cohomology of a very specific total complex, and the unique ``cocycle free" tool is the so-called \emph{Kac exact sequence}, (see \cite{Masuoka-Calculations} Corollary \ref{Kac exact sequence}). Perhaps one of first the results that provides a  cocycle free description and interpretation of the Kac cohomology is given in \cite[Proposotion 7.1]{Kac-cohomology}, where the authors describe the Kac cohomology as the \emph{singular cohomology} of the mapping cone $BG\sqcup BF \to B\Sigma$.

In this paper, we use two different kinds of relative cohomology groups: the Auslander relative cohomology and Hochschild relative cohomology (see Section \ref{Section:Kac Cohomomlogy}), in order to compute $\Opext(kF,k^G)$. We prove that $\Opext_{\rhd,\lhd}(kF,k^G)$ can be computed using Auslander relative cohomology and that  Auslander relative cohomology of a matched pair can be computed using relatively projective resolutions. This allows us to use other resolutions, besides the bar resolution, for computing $\Opext_{\rhd,\lhd}(kF,k^G)$.  In addition, we compute the first and second page of a spectral sequence which converges to the Kac cohomology. As a consequence, we compute, in terms of Hochschild relative cohomology, the associated five-term exact sequence, whose second term is $\Opext(kF,k^G)$. In the particular case of a semidirect product, the five-term exact sequence is described in terms of ordinary group cohomology. Finally, doing use of the five-term exact sequence and some non-standard resolutions, we compute  $\Opext(kF,k^G)$ for several families of matched pairs.

The organization of the paper is as follows: in Section \ref{Sec:Preliminaries} we discuss
preliminaries on group cohomology and abelian extensions of Hopf algebras. In Section \ref{Section:Kac Cohomomlogy} we recall the definitions of Auslander relative cohomology \cite{auslander1993relative} and Hochschild relative cohomology \cite{hochschild1956relative}. We prove that $\Opext(kF,k^G)$ can be computed using Auslander Relative cohomology and Auslander relative cohomology of matched pairs can be computed using relatively projective resolutions. In Section \ref{Section:Five-term exact sequence} we compute the first and second page of a spectral sequence which converges to the Kac cohomology. We also compute, in terms of Hochschild relative cohomology, the associated five-term exact sequence, whose second term is $\Opext(kF,k^G)$. Finally, in Section \ref{Section:Computations} we compute  $\Opext(kF,k^G)$ for several families of matched pairs.

\smallbreak\subsubsection*{Acknowledgements} We thank Paul Bressler and Bernardo Uribe for kindly answering some questions and for very useful conversations.
\section{Preliminaries}\label{Sec:Preliminaries}
\subsection{Cohomology of groups} 
Let $G$ be a group and let $M$ be a $G$-module. The $n^{th}$-\emph{cohomology group} of $G$ with coefficients in $M$ is defined as
\begin{equation*}
H^n(G,M)=\Ext^n_{\mathbb{Z}G}(\mathbb{Z},M).
\end{equation*}
We will use occasionally the \emph{normalized bar resolution} $(\mathbb{Z}\stackrel{\epsilon}{\leftarrow}P_i,\delta)$ of $\mathbb{Z}$ as a trivial $G$-module. That is \begin{equation*}P_i=\Z G[G]^i:=\Z G[s_1|\cdots |s_i],\end{equation*} where $s_i\in G$, $s_i\neq e$ for all $i$. The differentials are given by 

\begin{equation*}
\begin{aligned}
\delta([s_1|\ldots|s_{p+1}])=&s_1[s_2|\ldots|s_{p+1}]+\sum_{i=1}^{p}(-1)^i[s_1|\ldots|s_is_{i+1}|\ldots|s_{p+1}]\\
&+(-1)^{p+1}[s_1|\ldots|s_p].
\end{aligned}
\end{equation*}

For a finite cyclic group $C_n=\langle g\rangle$ of order $n$, we occasionally  use a periodic resolution of $\mathbb{Z}$, defined as follows:

\begin{equation}\label{CyclicResolution}
\cdots\stackrel{}{\to}\mathbb{Z}C_n\stackrel{g-1}{\to} \mathbb{Z}C_n\stackrel{N}{\to}\mathbb{Z}C_n\stackrel{g-1}{\to}\mathbb{Z}C_n\stackrel{\epsilon}{\to}\mathbb{Z},
\end{equation}
where, $N=\sum_{i=0}^{n-1} g^i$.  From this, we have that 

\begin{equation}\label{CyclicCohomology}
H^m(C_n,M) = \left\{
\begin{array}{c l}
 M^{C_n}/\Img(N), & m =2k\\
 \Ker(N)/\Img(g-1) & m=2k+1.
\end{array}
\right.
\end{equation}

\subsubsection{Resolutions and cohomology for direct products}
Let $G=G_1\times G_2$ be a direct product of groups. Let $(\mathbb{Z}\stackrel{\epsilon}{\leftarrow}P_i)$ and $(\mathbb{Z}\stackrel{\epsilon}{\leftarrow}Q_i)$ be projective resolutions of $\mathbb{Z}$ as $G_1$-module and $G_2$-module respectively. Then, the total complex $\Tot(P_i\otimes Q_i)$ is a $G$-projective resolution of $\mathbb{Z}$. A useful description of the cohomology for direct products is the following. If $M$ is a trivial $G$-module, then it holds (see e.g. \cite{karpilovsky1985projective})
\begin{equation}\label{Karpilovski}
    H^2(G_1\times G_2,M)\cong H^2(G_1,M)\oplus H^2(G_2,M)\oplus P(G_1,G_2;M),
\end{equation}

where $P(G_1,G_2;M)$ is the group of bicharacters from $G_1\times G_2$ to $M$. An isomorphism is given by $\overline{\alpha}\mapsto (\overline{\alpha_1},\overline{\alpha_2},\phi_\alpha)$, where $\alpha_i$ is the restriction of $\alpha$ to $Q_i\times Q_i$ and $$\phi_{\alpha}(x,y)=\Alt(\alpha)=\alpha(x,y)-\alpha(y,x).$$
the inverse isomorphism is defined by  $(\overline{\alpha_1},\overline{\alpha_2},\phi)\mapsto \overline{\alpha}$ with
$$\alpha((x_1,y_1),(x_2,y_2))=\alpha_1(x_1,x_2)\alpha_2(y_1,y_2)\phi(x_1,y_2).$$

\subsubsection{Second cohomology group and skew-symmetric matrices}

Another useful description of the second cohomology group in the case that $V$ is a finite abelian group is provided by using the universal coefficient theorem. Let $k$ be a field and let us consider the group $k^\times$ of units  of $k$ as a trivial $V$-module. There is a short exact sequence

\begin{equation}\label{UnivCoef}
0\to \operatorname{Ext}(V,k^\times)\to H^2(V,k^\times)\overset{\operatorname{Alt}}{\to} \Hom(\wedge^2(V),k^\times)\to 0.\end{equation}
If $V$ has exponent $n$  and $(k^\times)^n=k^\times$, then 
$\operatorname{Ext}(V,k^\times)=0$. Therefore, the map
  \begin{equation}\label{AltBilForms}
  \Alt:H^2(V,k^\times)\to \wedge^2\widehat{V},\end{equation}
where $\widehat{V}=\Hom(V,k^\times)$, defines an isomorphism $H^2(V,k^\times)\cong \wedge^2\widehat{V}$. \\

\subsection{Extensions of Hopf algebras}\label{HopfAlgExt}

Let $k$ be a field. A sequence of \emph{finite dimensional} Hopf algebras and Hopf algebra maps  \begin{equation*}
    (A):k\to K\overset{i}\rightarrow A\overset{\pi}\rightarrow H \to k
\end{equation*} is called an \emph{extension} of $H$ by $K$
if, $i$ is injective, $\pi$ is surjective, and $K= A^{\co H}$ (see \cite{Andrus-Devoto, Kac, majid1990physics}).

Two extensions $(A),(A')$ of $H$ by $K$ are said to be \emph{equivalent} if there is an homomorphism $f:A\to A'$ of Hopf algebras such that the diagram
\[\xymatrix{
&&A \ar[dd]^{f}\ar[rd]^{\pi}&&\\
k\ar[r]&K \ar[ru]^{i}\ar[rd]_{i'}&&H \ar[r]& k\\
&&A'\ar[ru]_{\pi'}&&
}\]
commutes.
\subsection{Matched pairs of groups}

Let us recall (see e.g., \cite{takeuchi1981matched})  that a \emph{matched pair of groups} is a collection 
 $(G,F,\rhd,\lhd)$ where $G,F$ are groups and $\rhd,\lhd$ are permutation actions \[\xymatrix{G & \ar[l]_{\lhd} G\times F \ar[r]^{\rhd}& F}\] such that
\begin{align*}
s\rhd xy=(s\rhd x)((s\lhd x)\rhd y),&&
st\lhd x=(s\lhd(t\rhd x))(t\lhd x),
\end{align*}for all $s,t \in G, x,y\in F$.

Having a group G and F with a matched pair structure is equivalent to having a group $\Sigma$ with an exact factorization;
the actions $\rhd$ and $\lhd$ are determined by the relations \[sx =(s \rhd x)(s \lhd x),\] $x \in F, s \in G$. 

The group $\Sigma$ associated to a matched pair of groups  will be denoted by $F\bowtie G$, and it is $F\times G$ with product given by \[(x,s)(y,t)=(x(s\rhd y),(s\lhd y)t).\]

It is easy to see that the following
conditions are equivalent:
\begin{itemize}
    \item[(i)]  The action $\rhd$ is trivial.
\item[(ii)] The action $\lhd: G \times F \to G$ is by group automorphisms.
\end{itemize}In this case, the associated group $\Sigma=F \ltimes G$ is a semi-direct product of groups.

\subsection{Abelian extensions}
Let $(G,F,\rhd,\lhd)$ be a matched pair of groups and let us consider $2$-cocycles  $\sigma \in Z^2(F,(k^G)^\times)$, $\tau \in Z^2(G,(k^F)^\times)$. On the vector space \[k^G \#_{\sigma,\tau}  k F:= \operatorname{Span}_k \{e_s\# x : s\in G, x\in F\},\]we can define a unital associative algebra and counital coassociate coalgebra structure by
\begin{align*}
(e_s\#x) (e_t\# y)&= \delta_{s\lhd x,t}\sigma(s;x,y)e_s\# xy,\\
\Delta(e_s\# x)&= \sum_{s=ab}\tau(a,b;x)e_a\# (b\rhd x)\otimes e_b\# x.
\end{align*}Here the 2-cocycles $\sigma$ and  $\tau$ are seen as functions
\begin{align*}
    \sigma: G\times F\times F  \to k^\times,&&
    (s,x,y) &\mapsto \sigma(s;x,y),\\
    \tau:G\times G \times F\to k^\times, && (s,t,x) &\mapsto \tau(s,t;x).  
\end{align*}

The map $\Delta$ is an algebra map if and only if the 2-cocycles satisfy the following compatibility condition
\begin{align*}
\sigma(st;x,y)\tau(s,t;xy) =&\sigma(s;t\rhd x,(t\lhd x)\rhd y)\sigma(t;x,y)\times \\&\tau(s,t;x)\tau(s\lhd(t\rhd x),t\lhd x;y),   \notag
\end{align*}
 for all $x,y \in G, s,t \in F$. In case that the 2-cocycles are compatible, the sequence  \begin{equation*}
k\to k^G\stackrel{i}{\rightarrow}k^G\#_{\sigma,\tau}kF\stackrel{\pi}{\to} kF\to k,
\end{equation*}is a Hopf algebra extension,
where $i(e_s)=e_s\# e$ and $\pi(e_s\# x)=x$. This kind of extension are called \emph{abelian extensions} of Hopf algebras.

The set of equivalence classes of abelian extensions associated to a fixed matched pair $ (G,F,\rhd,\lhd)$ is an abelian group with the Baer product of extensions, and will be denoted by $\Opext_{\rhd,\lhd}(kF,k^G)$ (see \cite{masuoka2002hopf} for mor details).



\section{Kac cohomology and relative cohomology}\label{Section:Kac Cohomomlogy}

In this section, we recall the definitions of two different kinds of relative group group cohomology: the Auslander relative cohomology \cite{auslander1993relative} and the Hochschild relative cohomology \cite{hochschild1956relative}. Our aim  is to prove that $\Opext_{\rhd,\lhd}(kF,k^G)$ can be computed using Auslander Relative cohomology which, in the case of matched pairs, can be computed using relatively projective resolutions. This allows us to use other resolutions besides the bar resolution for computing $\Opext_{\rhd,\lhd}(kF,k^G)$.

\subsection{Auslander relative cohomology of groups}\label{Auslander}


Let $\Sigma$ be a group and $X$ a $\Sigma$-set. We will denote by  $\Lambda_X$ the kernel of the augmentation map
\begin{align}\label{aumentation map}
\epsilon_X:\mathbb{Z}[X] \to \mathbb{Z},&&
x\mapsto 1,
\end{align}
where $\mathbb{Z}[X]$ is the  $\Sigma$-module associated to $X$.

\begin{definition}
Given $\Sigma$-module $A$, the \emph{$n^{th}$-cohomology group of $\Sigma$ relative to $X$ with coefficients in $A$} is defined by 
 \[H_\mathscr{A}^k(\Sigma,X;A):=\operatorname{Ext}_{\mathbb{Z}\Sigma}^{k-1}(\Lambda_X,A), \quad k\geq 1.\]
\end{definition}

Let $X$ be a $\Sigma$-set and $\mathcal{R}_X$ a set of representatives of the $\Sigma$-orbits in $X$. Using Shapiro's Lemma, we have that \[\Ext_\Sigma^k(\Z[X],A)=\prod_{x\in \mathcal{R}_X}\Ext^k_{\Sigma}(\Z[\mathcal{O}(x)],A)\cong  \prod_{x\in \mathcal{R}_X} \Ext^k_{\operatorname{St(x)}}(\Z,A),\] where $\operatorname{St(x)}$ denotes the stabilizer of $x\in X$. Hence, \[\Ext^k(\Z[X],A)=\prod_{x\in \mathcal{R}_X} H^k(\operatorname{St}(x),A).\]
If we apply the functor $\Ext_{\Sigma}(-,A)$ to the exact sequence of $\Sigma$-modules\[ 0\to \Lambda_X\to \Z[X]\to \Z\to 0,\] we obtain the well known long exact sequence of relative cohomology


\begin{equation}\label{long-sequence relative}
    \xymatrix@C-1em{\cdots H^k(\Sigma,A)\ar[r] & \prod_{x \in \mathcal{R}_X} H^k(\operatorname{St}(x),A)\ar[r] & H^{k+1}_{\mathscr{A}}(\Sigma,X,A)\ar[r] & H^{k+1}(\Sigma,A) \cdots   
    }
\end{equation}

\subsection{Hochschild Relative cohomology of groups}\label{Poincare}
Relative cohomology of groups was originally defined by Hochschild \cite{hochschild1956relative} and Adamson \cite{adamson1954cohomology}. We follow  the descrition given in \cite{Alperin-book}.\\ 

Let  $U$ be a $G$-module  and $S$ a subgroup of $G$. We say that $U$ is \emph{relatively $S$-projective} if it satisfies the  following equivalent properties (see  \cite[Proposition 1, page 65]{Alperin-book}):
\begin{itemize}
    \item[(i)] If $\psi:U \twoheadrightarrow V$ is a surjective  $G$-homomorphism and $\psi$ splits as $S$-homomorphisms then $\psi$ splits as $G$-homomorphism.
    \item[(ii)] If $\psi:V \twoheadrightarrow W$ is a surjective $G$-homomorphism and $\phi: U\to W$ is a $G$-homomorphism, then there is a $G$-homomorphism $\lambda:U\to V$ with $\psi \lambda=\phi$, provided that  there is an $S$-homomorphisms $\lambda_0:U\to V$ with the same property. 
    \item[(iii)] $U$ is a direct summand of $U\downarrow_S \uparrow_G.$
\end{itemize}Here, $\downarrow_S$ means the restriction to $S$, and $ \uparrow_G$ the induction to $G$. 

A complex 
\begin{equation*}
\mathscr{R}:\cdots\stackrel{}{\to}R_3\stackrel{\delta_3}{\to} R_2\stackrel{\delta_2}{\to}R_1\stackrel{\delta_1}{\to}R_0\stackrel{\epsilon}{\to}M
\end{equation*}
of $G$-modules is called a \emph{relatively $S$-projective resolution} if: 
\begin{enumerate}
\item each $G$-module $R_i$ is relatively $S$-projective,
\item the sequence has a contracting homotopy as $S$-modules.
\end{enumerate}
\begin{remark}
\begin{itemize}
    \item Since the canonical map  $M\downarrow_S \uparrow_G \to M$  splits as an $S$-homomorphism, if $\mathscr{T}$ is a projective $S$-resolution of $M\downarrow_S$, then $\mathscr{T}\uparrow_G$ is a relatively $S$-projective resolution of $M$.
    \item If $S$ is the trivial subgroup of $G$,  the relatively $S$-projective resolutions of  $G$-module  are the same as projective resolutions of $G$-modules.
\end{itemize}\end{remark}

\begin{definition}
Given a relatively $S$-projective resolution $\mathscr{R}$ of $M$, the $n^{th}$-relative $S$-cohomology group of $G$ is defined by
\[H^m(G,S;M)=H^n(\Hom_{G}(\mathscr{R},M),\delta^*).\]
\end{definition}
As expected, this definition does not depend on the chosen relatively $S$-projective resolution of $M$, (see e.g., \cite{hochschild1956relative}).
\begin{example}[Standard complex \cite{snapper1964cohomology}]\label{Example1}  Let  $X$ be a transitive left $G$-set.  Let $C_i=\mathbb{Z}X^{ (i+1) }$ be the free $\mathbb{Z}$-module generated by all $i+1$-tuples of elements of $X$. The group $G$ acts diagonally on $C_i$ and the sequence
\begin{equation*}
C^X_*:=\cdots\stackrel{}{\to}C_3\stackrel{\delta_3}{\to} C_2\stackrel{\delta_2}{\to}C_1\stackrel{\delta_1}{\to}C_0\stackrel{\epsilon}{\to}\mathbb{Z},
\end{equation*}
where 
\begin{equation}
    \delta_i(x_1,\ldots,x_{r+1})=\sum_{j=1}^{r+1}(-1)^{j+1}(x_1,\ldots,\widehat{x_j},\ldots,x_{r+1})
\end{equation}
and $\epsilon(x)=1$ for all $x\in X$, is a complex of $G$-modules.  

If $F$ denotes the stabilizer subgroup of $x_0\in X$, the complex $C^X_*$ is relatively $F$-projective resolution of $\mathbb{Z}$ called the \emph{standard complex of} $(G,X)$.
\end{example}


\begin{proposition}\label{prop:resultion relativa matched}
Let $\Sigma=F\bowtie G$ be a matched pair and $Q:=(\mathbb{Z}\stackrel{\epsilon_Q}{\leftarrow}Q_i,\delta_i')$ be the normalized right bar resolution of the trivial $G$-module $\mathbb{Z}$. Then the group $\Sigma$ acts on $Q_i$ by
\begin{align}\label{formula:action}
     [s_n|\ldots|s_2|s_1]\cdot (x,s)&=[s_n\lhd\big ( (s_{n-1}\dots s_1)\rhd x\big )|\ldots|s_2\lhd(s_1\rhd x)|s_1\lhd x]s,
\end{align}
and $Q$ is a relatively $F$-projective resolution of right $\Sigma$-modules . 
\end{proposition}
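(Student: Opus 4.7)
My plan is to carry out the proof in three stages: (i) verify that the formula extends to a well-defined right $\Sigma$-module structure on each $Q_i$ and that the augmented bar complex becomes a complex of right $\Sigma$-modules; (ii) show each $Q_i$ is relatively $F$-projective by identifying it with an induced module; (iii) verify that the standard contracting homotopy of the bar resolution is $F$-equivariant.

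For (i), since $\Sigma=FG$ with $(x,s)(y,t)=(x(s\rhd y),(s\lhd y)t)$, the action axioms reduce to three checks: the right $G$-action axiom (automatic, since it coincides with the original right multiplication on the bar resolution); the right $F$-action axiom $(a\cdot x)\cdot y=a\cdot(xy)$; and the cross-relation $(a\cdot s)\cdot x=a\cdot(s\rhd x)\cdot(s\lhd x)$. Both of the latter are slot-by-slot computations on bar symbols using the matched pair axioms $s\rhd(xy)=(s\rhd x)((s\lhd x)\rhd y)$, $(a\lhd u)\lhd v=a\lhd(uv)$, and $(st)\rhd x=s\rhd(t\rhd x)$. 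The $\Sigma$-equivariance of each differential $\delta'_i$ is then checked term by term — the right-drop term $[s_n|\ldots|s_2]\cdot s_1$, the merger terms $[\cdots|s_is_{i-1}|\cdots]$, and the left-drop term — using in particular the second matched pair axiom $(st)\lhd x=(s\lhd(t\rhd x))(t\lhd x)$ for the merger terms.

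For (ii), observe that the $F$-action preserves the basis $X_n:=(G\setminus\{e\})^n$ of bar symbols with leading coefficient $1$, because each $\lhd(\cdot,y)$ is a bijection of $G$ fixing $e$. Hence $\mathbb{Z}[X_n]$ inherits the structure of a right $F$-module, and the natural map
\[
\Phi_n\colon \mathbb{Z}[X_n]\otimes_{\mathbb{Z}F}\mathbb{Z}\Sigma \longrightarrow Q_n,\qquad [s_n|\ldots|s_1]\otimes\sigma \longmapsto [s_n|\ldots|s_1]\cdot\sigma,
\]
is a well-defined $\Sigma$-module homomorphism. Using the unique factorization $\Sigma=FG$, both sides are free abelian groups on $X_n\times G$, so $\Phi_n$ is an isomorphism. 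Hence $Q_n\cong \mathbb{Z}[X_n]\uparrow_F^{\Sigma}$ is induced from $F$ and therefore relatively $F$-projective.

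For (iii), take the standard contracting homotopy $h_n\colon Q_n\to Q_{n+1}$, $h_n([s_n|\ldots|s_1]\cdot g)=[s_n|\ldots|s_1|g]$ (with the normalization $[\cdots|e]=0$), together with $\mathbb{Z}\to Q_0$, $1\mapsto e$. To prove $F$-equivariance, expand $h_n(([s_n|\ldots|s_1]\cdot g)\cdot x)$ using $g\cdot x=(g\rhd x)(g\lhd x)$, apply the action formula to the resulting $n$-bar symbol, and compare with $h_n([s_n|\ldots|s_1]\cdot g)\cdot x=[s_n|\ldots|s_1|g]\cdot x$ expanded via the $(n+1)$-bar formula; the two expressions agree thanks to $(s_{n-1}\cdots s_1\cdot g)\rhd x=(s_{n-1}\cdots s_1)\rhd(g\rhd x)$, and the normalization is preserved because each $\lhd(\cdot,x)$ is a bijection fixing $e$. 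The main obstacle I anticipate is in stage (i), where one must carefully track how $\rhd$ and $\lhd$ interact across several slots simultaneously when checking differential equivariance; once this bookkeeping is done, the remaining stages are essentially formal.
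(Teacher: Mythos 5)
Your proposal is correct, but it takes a genuinely different route from the paper. The paper's proof is top-down: it identifies the right $\Sigma$-set $F\backslash\Sigma$ with $G$ (on which $\Sigma$ acts by $s\cdot(f,g)=(s\lhd f)g$), invokes the standard complex of Example \ref{Example1} — which is automatically a relatively $F$-projective resolution because $F$ is the stabilizer of $e$ — observes that this complex restricted to $G$ is the homogeneous form of the bar resolution, and then simply \emph{reads off} formula \eqref{formula:action} by transporting the diagonal $\Sigma$-action through the bar basis $[s_i|\cdots|s_1]=(s_i\cdots s_1,\ldots,s_1,e)$; relative projectivity, equivariance of the differentials, and the $F$-equivariant contracting homotopy (append the fixed point $e$) all come for free from the general theory of standard complexes. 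You instead work bottom-up on the bar complex itself: you verify the action axioms and differential equivariance slot by slot (correctly locating the second matched-pair axiom $st\lhd x=(s\lhd(t\rhd x))(t\lhd x)$ as the input for the merger terms), you exhibit $Q_n\cong\Z[X_n]\uparrow_F^\Sigma$ as an induced module via the unique factorization $\Sigma=FG$, and you check $F$-equivariance of the usual contracting homotopy directly using $(s_{n-1}\cdots s_1g)\rhd x=(s_{n-1}\cdots s_1)\rhd(g\rhd x)$. All three stages check out (in particular $e\lhd x=e$ and $t\lhd e=t$ guarantee that $F$ preserves the normalized basis and that the homotopy is $F$-equivariant), so the argument is sound. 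What the paper's approach buys is brevity and the fact that the action formula is \emph{derived} rather than verified; what yours buys is independence from Example \ref{Example1} and the explicit induced-module description $Q_n\cong\Z[X_n]\uparrow_F^\Sigma$, which makes the relative $F$-projectivity of each term completely transparent. The only cost is the multi-slot bookkeeping in your stage (i), which the homogeneous-coordinate picture renders trivial.
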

\begin{proof}
Since $\Sigma$ is a matched pair, the right $\Sigma$-set of cosets $F \backslash \Sigma$ can be identified with the set $G$ and $\Sigma$-action $s\cdot (f,g)=(s\lhd f)g$. This $\Sigma$-set will be denoted by $X$.
Applying the construction of Example \ref{Example1} to $X$, we obtain a relatively $F$-projective resolution $C:=C_i\stackrel{\epsilon}{\to} \Z$, since $F$ is the stabilizer of $e\in G$. The resolution $C$ coincides with the standard $G$-free resolution of $\mathbb{Z}$ as a trivial $G$-module. A $G$-basis of $C_i$ (called bar basis) is given by  \[[s_i|\cdots|s_2|s_1]=(s_i\cdots s_1,\cdots ,s_2s_1,s_1,e).\]  The action of $\Sigma$ in this basis is given by \eqref{formula:action}. The normalized bar resolution is a quotient of the bar resolution, and it is easy to see that this is also  relatively $F$-projective.
\end{proof}

\begin{remark}\label{Remark about projectively resolution}
\begin{itemize}
    \item In Proposition \ref{prop:resultion relativa matched}, we may consider the normalized \emph{left bar resolution} $(\mathbb{Z}\stackrel{\epsilon_P}{\leftarrow}P_i,\delta_i')$ for the trivial $G$-module $\mathbb{Z}$:
with action of $\Sigma$ given by 
\begin{align}\label{formula:action2}
     (x,s)\cdot[x_1|x_2|\cdots|x_n]=&x[s\rhd x_1|(s\lhd x_1)\rhd x_2|(s\lhd x_1x_2)\rhd x_3|\cdots\\
     &\cdots|(s\lhd(x_1\cdots x_{n-1})\rhd x_n)],\notag.
\end{align}
This is a relatively $G$-projective resolution.
\item The formulas \eqref{formula:action} and \eqref{formula:action2} appear in \cite{Masuoka-Calculations}.
\end{itemize}

\end{remark}

\begin{theorem}\label{thm:non-standard res}
Let $\Sigma=F\bowtie G$ be a matched pair. Let $(\mathbb{Z}\stackrel{\epsilon_P}{\leftarrow} P_i,\delta_i)$  and  $(\mathbb{Z}\stackrel{\epsilon_Q}{\leftarrow} Q_i,\delta_i')$  be  a relatively  $F$-projective  and a relatively  $G$-projective $\Sigma$-resolutions of $\Z$, respectively. Then the total complex of the tensor product double complex \begin{equation*}
P_{i}\otimes Q_{j}\mbox{ for }i,j\geq 0.\end{equation*}is a projective $\Sigma$-resolution of $\Z$.
\end{theorem}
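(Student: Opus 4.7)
The plan is to verify the two defining properties of a projective $\Sigma$-resolution of $\Z$ for the total complex $T_n=\bigoplus_{i+j=n}P_i\otimes Q_j$ (equipped with the diagonal $\Sigma$-action): first, that every summand $P_i\otimes Q_j$ is $\Sigma$-projective, and second, that the augmented total complex $T_\bullet\to\Z$ is acyclic.

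For the projectivity step, I would use characterization (iii) of relative projectivity: $P_i$ is a direct summand of $P_i\!\downarrow_F\!\uparrow_\Sigma$ and $Q_j$ is a direct summand of $Q_j\!\downarrow_G\!\uparrow_\Sigma$. Since projectivity is closed under direct summands and tensor distributes over direct sums, it suffices to show that $(A\!\uparrow_\Sigma)\otimes (B\!\uparrow_\Sigma)$ is $\Sigma$-projective for every $F$-module $A$ and $G$-module $B$. I would apply the standard projection formula
\[
(A\!\uparrow_\Sigma)\otimes N \;\cong\; (A\otimes N\!\downarrow_F)\!\uparrow_\Sigma,\qquad (\sigma\otimes a)\otimes n\longmapsto \sigma\otimes(a\otimes \sigma^{-1}n),
\]
to $N=B\!\uparrow_\Sigma$, which reduces the question to the computation of $(B\!\uparrow_\Sigma)\!\downarrow_F$. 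This is where the matched-pair axioms enter crucially: since $\Sigma=FG$ with $F\cap G=\{e\}$, the group $\Sigma$ is a free right $G$-set with transversal $F$ and there is a unique $F$-$G$ double coset, so Mackey's formula collapses to the single summand
\[
(B\!\uparrow_\Sigma)\!\downarrow_F \;\cong\; \Z F\otimes B,
\]
with $F$ acting by left translation on $\Z F$ and trivially on $B$. Thus the diagonal $F$-action on $A\otimes \Z F\otimes B$ is trivial on the $B$-factor, and the classical isomorphism $a\otimes f\mapsto f^{-1}a\otimes f$ exhibits $A\otimes \Z F$ (diagonal) as an $F$-free module; inducing back to $\Sigma$ produces a $\Sigma$-free module, and hence $P_i\otimes Q_j$ is $\Sigma$-projective as a direct summand of it.

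For the acyclicity step, the hypothesis that $P_\bullet\to\Z$ is a relatively $F$-projective resolution provides a contracting homotopy $h_P$ as $F$-modules, and hence a fortiori as $\Z$-modules. Tensoring over $\Z$ with $Q_j$ preserves this homotopy, so every row $P_\bullet\otimes Q_j\to Q_j$ of the augmented bicomplex is $\Z$-contractible, in particular exact; the same argument applies to the columns using the $G$-contraction $h_Q$ of $Q_\bullet\to\Z$. Either by the standard spectral-sequence argument for a first-quadrant bicomplex with exact rows, or by explicitly assembling $h_P\otimes 1$ and $1\otimes h_Q$ into a contracting homotopy of the total complex, one concludes that $\Tot(P\otimes Q)\to\Z$ is exact.

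The main obstacle lies in the projectivity step: one has to use the matched-pair hypotheses $\Sigma=FG$ and $F\cap G=\{e\}$ precisely to collapse Mackey's formula to a single term and to identify $(B\!\uparrow_\Sigma)\!\downarrow_F$ as an $F$-free module; here the hypothesis that we have a genuine matched pair (not merely two subgroups) is essential. Once this is in hand, the acyclicity step is a routine bicomplex computation using the contracting homotopies guaranteed by the definition of a relative projective resolution.
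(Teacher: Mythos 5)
Your proof is correct and follows essentially the same route as the paper: projectivity of $P_i\otimes Q_j$ is reduced, via the exact factorization $\Sigma=FG$ with $F\cap G=\{e\}$, to the fact that the tensor product of a module induced from $F$ with one induced from $G$ is induced from the trivial subgroup, and acyclicity is the standard bicomplex argument with exact rows and columns. The differences are only presentational: where the paper cites Mackey's tensor product theorem and the Acyclic Assembly Lemma, you derive the Mackey collapse by hand from the projection formula and the coset decomposition $\Sigma=\bigsqcup_{f\in F}fG$, and you assemble the contracting homotopies explicitly (both arguments, like the paper's, tacitly use that the $P_i\otimes Q_j$ are $\Z$-free so that the induced modules are genuinely free).
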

\begin{proof}
Since $P_i$ is relatively $F$-projective, $P_i\downarrow_F\uparrow^\Sigma=P_i\oplus P'_i$ as $\Sigma$-modules. Analogously,  $Q_j\downarrow_G\uparrow^\Sigma=Q_i\oplus Q'_i$.

Using the fact that  $(F,G)$ is an exact factorization of $\Sigma$ and the Mackey's tensor product theorem (see e.g. \cite[Theorem 10.18]{Curtis-book}), we have that
\begin{align*}
(P_i\otimes Q_j)\downarrow_{\{e\}}\uparrow^\Sigma & \cong P_i\downarrow_F\uparrow^\Sigma\otimes Q_j\downarrow_G\uparrow^\Sigma,\\
&= (P_i\oplus P_i')\otimes (Q_j\oplus Q_j')\\
&= P_i\otimes Q_j \oplus \big (  P_i\otimes Q'_j \oplus P'_i\otimes Q_j \oplus  P_i'\otimes Q_j\oplus P_i'\otimes Q_j'\big ).
\end{align*}Hence $P_i\otimes Q_j$ is direct summand of the $\Sigma$-free module $(P_i\otimes Q_j)\downarrow_{\{e\}}\uparrow^\Sigma$, that is, $P_i\otimes Q_j$ is a projective $\Sigma$-module. 

Finally, since each  $P_i$ and $Q_j$ is a free $\Z$-module,  all rows (and columns) of $P_*\otimes Q_*$ are exact, then it follows from the Acyclic Assembly Lemma (see e.g., \cite[Lemma 2.7.3]{Weibel}) that $\operatorname{Tot}(P_*\otimes Q_*)$ is acyclic.
\end{proof}
Theorem \ref{thm:non-standard res} generalizes the results of \cite{Masuoka2,MR1957830} about the construction of non-standard free resolutions  of $\Z$ associated to a matched pairs of groups. In fact, taking the relatively projective resolutions of Proposition \ref{prop:resultion relativa matched} and Remark \ref{Remark about projectively resolution} we can obtain the resolutions in \cite{Masuoka2,MR1957830}.
\subsection{Kac cohomology as relative group cohomology}

Using the bijective maps,
\begin{align*}
    \Sigma/G\to F,&&  (f,g)G\mapsto f, \\
    F\backslash\Sigma\to G, && F(f,g) \mapsto g
\end{align*}
 we can endow the set $F$ with a left $\Sigma$-action $(f,g)x=f(g\rhd x)$ and $G$ with a right $\Sigma$-action $s(f,g)=(s\lhd f)g$. From now on, $X$ will denote the left $\Sigma$-set defined as the disjoint union $F\sqcup G$, where $G$ is considered a left $\Sigma$-set using the inverse.
We denote by $\Lambda_X$ the kernel of the augmentation map \eqref{aumentation map}.
\begin{proposition}\label{ResDelta}
Let $\Sigma=F\bowtie G$ be a matched pair. Let $(\mathbb{Z}\stackrel{\epsilon_P}{\leftarrow} P_i,\delta_i)$  and  $(\mathbb{Z}\stackrel{\epsilon_Q}{\leftarrow} Q_i,\delta_i')$  be  a relatively  $F$-projective  and a relatively  $G$-projective $\Sigma$-resolutions of $\Z$, respectively.  Let $D^{*,*}$ be the truncated tensor product double complex  
\begin{equation}\label{TruncateComplex}
D^{i,j}:=P_{i+1}\otimes Q_{j+1}\mbox{ for }i,j\geq 0.\end{equation} 
Then, the total complex $(\Tot(D^{*,*}),d_i)$ completed with the map
\begin{equation*}
P_0\otimes Q_0\stackrel{-\delta_1\otimes\delta'_1}{\longleftarrow}\Tot(D^{*,*}),    
\end{equation*}
is a projective $\Sigma$-resolution of $\Lambda_X$.
\end{proposition}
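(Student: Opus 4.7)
The plan is to establish three properties of the augmented complex: projectivity of every term, acyclicity in positive degrees, and identification of the degree-zero cokernel with $\Lambda_X$.

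The projectivity of $P_i\otimes Q_j$ as a $\Sigma$-module follows by the same Mackey-theorem argument used in Theorem \ref{thm:non-standard res}, which applies verbatim in every bidegree including $(0,0)$: the exact factorization $\Sigma=FG$ with $F\cap G=\{e\}$ together with the relative projectivity hypotheses on $P_i$ and $Q_j$ realize $P_i\otimes Q_j$ as a direct summand of the $\Sigma$-free module $(P_i\otimes Q_j)\downarrow_{\{e\}}\uparrow^{\Sigma}$.

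For acyclicity, I would observe that each $P_i$ and $Q_j$ is $\Z$-free, being a summand of a module induced from the $\Z$-free trivial module, so the shifted complexes $P_{\geq 1}$ and $Q_{\geq 1}$ are $\Z$-flat resolutions of $K_P:=\ker\epsilon_P$ and $K_Q:=\ker\epsilon_Q$. The Acyclic Assembly Lemma (as invoked in Theorem \ref{thm:non-standard res}) then gives that $\Tot(D^{*,*})=\Tot(P_{\geq 1}\otimes_\Z Q_{\geq 1})$ computes $\operatorname{Tor}^\Z_*(K_P,K_Q)$; since $K_P$ is a subgroup of a free abelian group and therefore itself $\Z$-free, this vanishes in positive degrees and equals $K_P\otimes K_Q$ in degree zero. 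The augmentation $-\delta_1\otimes\delta'_1$ factors as a surjection $P_1\otimes Q_1\twoheadrightarrow K_P\otimes K_Q$ composed with the canonical inclusion $K_P\otimes K_Q\hookrightarrow P_0\otimes Q_0$ (injective by flatness of $P_0$ and $Q_0$), so its image is $K_P\otimes K_Q$ and the full augmented complex is exact in every positive degree with $H_0\cong (P_0\otimes Q_0)/(K_P\otimes K_Q)$.

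Finally, I would identify this cokernel with $\Lambda_X$ using the natural resolutions of Proposition \ref{prop:resultion relativa matched} and Remark \ref{Remark about projectively resolution}, where $P_0=\Z[G]$ and $Q_0=\Z[F]$, so that $K_P=\Lambda_G$ and $K_Q=\Lambda_F$. Define the $\Sigma$-equivariant bilinear map
\[
\phi:\Z[G]\otimes\Z[F]\longrightarrow \Z[F]\oplus\Z[G]=\Z[X],\qquad x\otimes y\longmapsto\bigl(\epsilon_P(x)\,y,\;-x\,\epsilon_Q(y)\bigr).
\]
Since $\phi=(\epsilon_P\otimes 1,\,-(1\otimes\epsilon_Q))$, a short computation gives $\epsilon_X\circ\phi=0$ (so $\phi$ lands in $\Lambda_X$) and $\ker\phi=\Lambda_G\otimes\Lambda_F=K_P\otimes K_Q$; surjectivity onto $\Lambda_X$ follows from the three identities
\[
\phi(g\otimes f)=(f,-g),\quad \phi((g_1-g_2)\otimes f)=(0,\,g_2-g_1),\quad \phi(g\otimes(f_1-f_2))=(f_1-f_2,\,0),
\]
whose images jointly generate $\Lambda_X$. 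The main obstacle is this last identification, where one must simultaneously verify $\Sigma$-equivariance, the exact image $\Lambda_X$, and the exact kernel $K_P\otimes K_Q$.
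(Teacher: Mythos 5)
Your treatment of projectivity and of exactness in positive degrees is correct, and it takes a genuinely different (and arguably cleaner) route than the paper: where you identify $H_*(\Tot(D^{*,*}))$ with $\operatorname{Tor}^{\Z}_*(K_P,K_Q)$ and kill the higher terms by $\Z$-freeness of $K_P$, the paper instead runs an explicit diagram chase through the full untruncated tensor-product double complex of $P_*\leftarrow\Z$ with $Q_*\leftarrow\Z$. Your map $\phi$ is likewise a direct substitute for the paper's two-step identification (first $\operatorname{coker}\cong\Lambda:=\ker(\epsilon_P+\epsilon_Q)$ via $\theta(p\otimes q)=-p\,\epsilon_Q(q)\oplus\epsilon_P(p)\,q$, then $\Lambda\cong\Lambda_X$), and the three verifications you list for $\phi$ (equivariance, image equal to $\Lambda_X$, kernel equal to $K_P\otimes K_Q$) all go through.

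The genuine gap is in the final step: you identify the degree-zero cokernel with $\Lambda_X$ only for the particular resolutions with $P_0=\Z[G]$ and $Q_0=\Z[F]$, whereas the proposition is asserted for \emph{arbitrary} relatively $F$- and $G$-projective resolutions, for which your $\phi$ does not even have $\Z[X]$ as a natural target. You still owe an argument that $\operatorname{coker}(P_1\otimes Q_1\to P_0\otimes Q_0)$ does not depend on the chosen resolutions. The paper closes exactly this gap with the comparison theorem for relatively projective resolutions: there are homotopy equivalences $P_*\simeq C^G_*$ and $Q_*\simeq C^F_*$ over $\id_{\Z}$, which tensor to a homotopy equivalence of the two-term complexes $P_1\otimes Q_1\to P_0\otimes Q_0$ and $C^G_1\otimes C^F_1\to C^G_0\otimes C^F_0$, hence induce an isomorphism of cokernels; you should add this (or some equivalent reduction) before invoking the standard resolutions. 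A secondary quibble: your justification that each $P_i$ is $\Z$-free is circular as stated, since $P_i$ is a summand of $P_i\downarrow_F\uparrow^\Sigma$, which is $\Z$-free only if $P_i$ already is; the paper makes the same unproved assumption, so this is at worst an inherited imprecision, but it deserves either a hypothesis or a one-line argument.
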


\begin{proof}
Let $\Lambda$ be the kernel of the map $\epsilon:P_0\oplus Q_0\to \mathbb{Z}$ defined by $\epsilon(x\oplus y)=\epsilon_P(x)+\epsilon_Q(y)$.
Let us consider the total complex $(\Tot(D^{*,*}),d_i)$ completed with the maps

\begin{equation}\label{eq:res_delta_proof}
0\stackrel{}{\leftarrow}\Lambda \stackrel{\theta}{\leftarrow}P_0\otimes Q_0\stackrel{-\delta_1\otimes\delta'_1}{\longleftarrow}\Tot(D^{*,*}),  
\end{equation}where $\theta(p\otimes q)=-p\epsilon_Q(q)\oplus \epsilon_P(p)q$.
Using the same argument of Theorem \ref{thm:non-standard res},  we have that $\Tot(D^{*,*})$ is an acyclic complex of  projective $\Sigma$-modules.

Let us see that \eqref{eq:res_delta_proof} is exact in  $P_1\otimes Q_1$. The map $d_1:P_2\otimes Q_1\oplus P_1\otimes Q_2\to P_1\otimes Q_1$ in $\Tot(D^{*,*})$ is defined by $d_1(a\oplus b)=(\delta_2\otimes \id)(a)-(\id\otimes \delta_2')(b)$, where $a\in P_2\otimes Q_1$ and $b\in P_1\otimes Q_2$. Hence, the composed map is given by 
\[(-\delta_1\otimes\delta_1')\circ d_1(a,b)=(-\delta_1\otimes\delta'_1)(\delta_2\otimes \id)(a)+(\delta_1\otimes\delta'_1)(\id\otimes \delta_2')(b)=0.\]
Thus, $\Img(d_1)\subseteq \Ker(-\delta_1\otimes\delta'_1)$. Now, since $P_i$ and $Q_i$ are free $\Z$-modules, then 
\begin{align*}
    \Ker(-\delta_1\otimes\delta_1')&=\Ker(\delta_1)\otimes Q_1+P_1\otimes \Ker(-\delta_1')\\
    &=\Img(\delta_2)\otimes Q_1-P_1\otimes \Img(\delta_2').
\end{align*}
so $\Ker(-\delta_1\otimes\delta'_1)\subseteq \Img(d_1)$. To see the exactness in $P_0\otimes Q_0$, note that 
\[\theta\circ(\delta_1\otimes\delta_1')(p\otimes q)=\delta_1(p)\epsilon_Q(\delta_1'(q))\oplus\epsilon_P(\delta_1(p))\delta_1(q)=0.\]
Hence, $\Img(\delta_1\otimes\delta_1')\subseteq \Ker(\theta)$.
To see that $\Ker(\theta)\subseteq\Img(\delta_1\otimes\delta_1')$, let us consider the tensor product of the two chain complexes $0\leftarrow\mathbb{Z}\stackrel{\epsilon_P}{\leftarrow} P_i$ and $0\leftarrow\mathbb{Z}\stackrel{\epsilon_Q}{\leftarrow} Q_i$. That is,

\begin{equation}\label{Diagram3}
\xymatrix{
  &\mathbb{Z}\otimes Q_1 \ar[d]_{-id\otimes\delta'_1} &P_0\otimes Q_1\ar[d]^{id\otimes\delta'_1} \ar[l]_{\epsilon_P\otimes id} &P_1\otimes Q_1\ar[l]_{\delta_1\otimes id}\ar[d]^{-id\otimes\delta'_1}  \\
    &\mathbb{Z}\otimes Q_0\ar[d]_{-id\otimes\epsilon_Q} &P_0\otimes Q_0\ar[l]^{\epsilon_P\otimes id}\ar[d]^{id\otimes\epsilon_Q}& P_1\otimes Q_0\ar[l]^{\delta_1\otimes id}\ar[d]^{-id\otimes\epsilon_Q}\\
 &\mathbb{Z} &P_0\otimes\mathbb{Z} \ar[l]^{\epsilon_P\otimes id} &P_1\otimes\mathbb{Z}\ar[l]^{\delta_1\otimes id}.\\
}
\end{equation}
whose total complex is given by the exact sequence

\begin{equation}\label{sequece3}0\leftarrow\mathbb{Z}\otimes\mathbb{Z}\stackrel{\epsilon}{\longleftarrow}P_0\otimes\mathbb{Z}\oplus \mathbb{Z}\otimes Q_0\stackrel{d_1}{\longleftarrow}P_1\otimes\mathbb{Z}\oplus P_0\otimes Q_0\oplus \mathbb{Z}\otimes Q_1\leftarrow\cdots\end{equation}

which can be written as
\begin{equation}\label{sequence4}
\mathbb{Z}
\stackrel{\epsilon}{\longleftarrow}P_0\oplus Q_0\stackrel{d_1}{\longleftarrow}P_1\oplus P_0\otimes Q_0\oplus  Q_1\leftarrow\cdots    
\end{equation}
Note that $\theta$ is the restriction of $d_1$ to $P_0\otimes Q_0$. Suppose that $c\in \Ker(\theta)$. Then, $(\id\otimes\epsilon_Q)(c)=(\epsilon_P\otimes\id)(c)=0$. Let $b_2,b_3$ be the preimages of $c$ under the vertical and horizontal differentials in \eqref{Diagram3} respectively. There is a tuple $b=(b_1,b_2,-b_3,-b_4)$ such that $d_2(b)=0$. Since the total complex of \eqref{Diagram3} is acyclic, there is a tuple $a=(a_1,a_2,a_3,a_4,a_5)$ such that $d_3(a)=b$, and, it can be verified that $a_3\in P_1\otimes Q_1$ satisfies $\theta(a_3)=c$. Therefore, $\Ker(\theta)\subseteq\Img(\delta_1\otimes\delta_1')$.

To see that $\theta$ is surjective, note that $\epsilon=\epsilon_P+\epsilon_Q$ in \eqref{sequence4},  and \[d_1(a\oplus b\oplus c)=(\delta_1(a)+(id\otimes \epsilon_Q)(b))\oplus ((\epsilon_P\otimes id)(b) - \delta'_1(c)).\] Let $p_0\in d_1(P_1)=\Ker(\epsilon_P)$ and let $q\in Q_0$ such that $\epsilon_Q(q)=1$. Then, $$d_1(p_0\otimes q)=(-id\otimes\epsilon_Q+\epsilon_P\otimes id)(p_0\otimes q)=p_0,$$ so $d_1(P_1)\subseteq d_1(P_0\otimes Q_0)$. Similarly, $d_1(Q_1)\subseteq d_1(P_0\otimes Q_0)$. Hence, \[\Lambda=\Ker(\epsilon)=\Img(d_1)=\spann\{d_1(P_1)\cup d_1(P_0\otimes Q_0)\cup d_1(Q_1)\}=d_1(P_0\otimes Q_0).\]
Also, the map $d_1$ restricted to $P_0\otimes Q_0$ is given by $-\id\otimes\epsilon_Q\oplus\epsilon_P\otimes \id=\theta$, then $\Lambda=\Img(\theta)$ and therefore the sequence \eqref{eq:res_delta_proof} is exact.

Finally, we see that $\Lambda$ and $ \Lambda_X$ are isomorhpic as $\Sigma$-modules. Let us take $P_*'=C^G_*$ and $Q_*'=C^F_*$, the standard resolutions  as in Example \ref{Example1}, were $F$ and $G$ are consider as $\Sigma$-sets. In this case, \[C_0^G\oplus C_0^F= \Z[G]\oplus \Z[F]\cong \Z[G\sqcup F]=\Z[X],\]
and $\epsilon$ is the augmentation map \eqref{aumentation map}. Then, for this resolution $\Lambda=\Lambda_X$.

If  $(\mathbb{Z}\stackrel{\epsilon_P}{\leftarrow} P_i,\delta_i)$ and $(\mathbb{Z}\stackrel{\epsilon_Q}{\leftarrow} Q_i,\delta_i)$ are relatively projective resolutions, there exists homotopy equivalences $$s: P_*\to C^G_*, \quad l:Q_*\to C^F_*$$ This implies that $P_0\otimes Q_0\stackrel{-\delta_1\otimes\delta'_1}{\longleftarrow}\Tot(D^{*,*})$ is homotopically equivalent to 
$P_0'\otimes Q_0'\stackrel{-\delta_1\otimes\delta'_1}{\longleftarrow}\Tot(D^{*,*})$. 
Hence,
\begin{align*}
\Lambda_X \cong \operatorname{Coker}(P_1'\otimes Q_1'\to P_0'\otimes Q_0')\cong \operatorname{Coker}(P_1\otimes Q_1\to P_0\otimes Q_0)\cong \Lambda.
\end{align*}
\end{proof}

\begin{theorem}\label{OpextRelativeCohomology}
Let $k$ be a field and  $\Sigma= F\bowtie G$ a matched pair of groups. Then,
\[\Opext_{\rhd,\lhd}(kF,k^G)\cong H^3_\mathscr{A}(\Sigma,X;k^\times),\]
where $k^\times$ is considered as a trivial $\Sigma$-module.
\end{theorem}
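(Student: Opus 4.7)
The plan is to produce an explicit projective $\Sigma$-resolution of $\Lambda_X$, compute $\Ext^2_{\mathbb{Z}\Sigma}(\Lambda_X, k^\times)$ from it, and identify the resulting double complex with the Kac double complex that defines $\Opext_{\rhd,\lhd}(kF,k^G)$ in Section \ref{HopfAlgExt}.

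First I would invoke Proposition \ref{ResDelta}: for any $F$-relatively projective $\Sigma$-resolution $(P_*,\delta_*)$ and any $G$-relatively projective $\Sigma$-resolution $(Q_*,\delta'_*)$ of $\mathbb{Z}$, the total complex of the truncated tensor product $D^{i,j}=P_{i+1}\otimes Q_{j+1}$, completed by the augmentation $\theta\colon P_0\otimes Q_0\to\Lambda_X$, is a projective $\Sigma$-resolution of $\Lambda_X$. Consequently,
\[H^3_{\mathscr{A}}(\Sigma,X;k^\times) \;=\; \Ext^2_{\mathbb{Z}\Sigma}(\Lambda_X,k^\times) \;=\; H^2\bigl(\Hom_\Sigma(\Tot(D^{*,*}),k^\times)\bigr).\]

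Next I would specialize to the bar-type resolutions of Proposition \ref{prop:resultion relativa matched} and Remark \ref{Remark about projectively resolution}, taking $P_*$ to be the normalized right bar resolution on $G$-letters with $\Sigma$-action \eqref{formula:action}, and $Q_*$ the normalized left bar resolution on $F$-letters with $\Sigma$-action \eqref{formula:action2}. A Mackey-style argument, analogous to the one used in Theorem \ref{thm:non-standard res}, gives a natural isomorphism
\[\Hom_\Sigma(P_{i+1}\otimes Q_{j+1},k^\times)\;\cong\;\Map_*(G^{i+1}\times F^{j+1},k^\times),\]
where $\Map_*$ denotes normalized maps. Under this identification, the total-degree-$2$ piece becomes
$\Map_*(G^2\times F,k^\times)\oplus\Map_*(G\times F^2,k^\times)$, i.e.\ precisely the space of pairs $(\sigma,\tau)$ parametrising abelian extensions in Section \ref{HopfAlgExt}. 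I would then compute the horizontal and vertical differentials on the Hom double complex by transporting $\delta_*\otimes\id\pm\id\otimes\delta'_*$ through this isomorphism, and verify that they coincide, up to signs, with the Kac coboundaries; the $d^2=0$ condition thereby reduces to the compatibility
\[\sigma(st;x,y)\tau(s,t;xy)=\sigma(s;t\rhd x,(t\lhd x)\rhd y)\sigma(t;x,y)\tau(s,t;x)\tau(s\lhd(t\rhd x),t\lhd x;y),\]
and the $d^1$-image to the standard equivalence relation on cocycle pairs.

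The hard part will be the differential-matching in this second step: the twisted $\Sigma$-actions \eqref{formula:action}--\eqref{formula:action2} insert the matched-pair operations $\rhd$ and $\lhd$ at exactly the positions needed to reproduce the Kac compatibility above, and a careful bookkeeping is required to see this through all signs and normalizations. The degree shift built into $D^{i,j}=P_{i+1}\otimes Q_{j+1}$ is essential, since it is what makes $\Ext^2_{\mathbb{Z}\Sigma}(\Lambda_X,k^\times)$ pick up the Kac $2$-cocycles rather than those of a neighbouring degree. Once these identifications are in place, the theorem follows by comparing $H^2$ of the total Hom complex with the definition of $\Opext_{\rhd,\lhd}(kF,k^G)$ recalled in Section \ref{HopfAlgExt}.
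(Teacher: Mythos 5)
Your proposal follows essentially the same route as the paper: Proposition \ref{ResDelta} identifies $H^3_{\mathscr{A}}(\Sigma,X;k^\times)$ with the cohomology of $\Hom_\Sigma$ applied to the truncated tensor-product complex built from the bar-type resolutions of Proposition \ref{prop:resultion relativa matched}, and that Hom double complex is recognized as the Kac double complex computing $\Opext_{\rhd,\lhd}(kF,k^G)$ (a step the paper delegates to the cited work of Masuoka rather than carrying out the differential-matching by hand as you propose). Only mind the degree bookkeeping: with the paper's indexing the relevant group is $H^1(\Tot(\Hom_\Sigma(D^{*,*})))$, which equals $\Ext^2_{\mathbb{Z}\Sigma}(\Lambda_X,k^\times)$ precisely because of the shift $D^{i,j}=P_{i+1}\otimes Q_{j+1}$, so your ``$H^2$'' is correct only under the convention that places $\Hom_\Sigma(P_0\otimes Q_0,k^\times)$ in degree $0$.
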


\begin{proof}
Let $(\mathbb{Z}\stackrel{\epsilon_P}{\leftarrow} P_i,\delta_i)$ and $(\mathbb{Z}\stackrel{\epsilon_Q}{\leftarrow} Q_i,\delta_i´')$ be the resolutions in Proposition \ref{prop:resultion relativa matched}. These are the resolutions used in \cite{Masuoka2} to compute $\Opext_{\rhd,\lhd}(kF,k^G)$: they consider the truncated tensor product $D^{i,j}$ of the two resolutions to get \[\Opext_{\rhd,\lhd}(kF,k^G)\cong H^1(\Tot(\Hom_\Sigma(D^{i,j}))).\]
If $\Lambda$ is the kernel of the map $\epsilon:P_0\oplus Q_0\to \mathbb{Z}$ defined by $\epsilon(x\oplus y)=\epsilon_P(x)+\epsilon_Q(y)$, (here, $P_0:=\mathbb{Z}F=\mathbb{Z}X$ and $Q_0:=\mathbb{Z}Y$ for the $\Sigma$-sets $X=F$ and $Y=G$) then, by Theorem \ref{ResDelta}, the total complex of $E_{i,j}:=P_{i+1}\otimes Q_{j+1}\mbox{ for }i,j\geq 0$ completed in the following way
\begin{equation*}
0{\longleftarrow}\Lambda\stackrel{\theta}{\longleftarrow}P_0\otimes Q_0\stackrel{-\delta_1\otimes\delta'_1}{\longleftarrow}\Tot(D_{*,*}),    
\end{equation*}
is a resolution of $\Lambda$, and $\Lambda=\Lambda_X$. Therefore, if we apply $\Hom_\Sigma(-,k^\times)$ to this total complex, we get the relative cohomology groups $H_\mathscr{A}^n(\Sigma,X;A)$. That is 

\begin{equation}\label{RelativeTot}
    H^k(\Hom_\Sigma(\Tot(D^{*,*})))\cong H_\mathscr{A}^{k+2}(\Sigma,X;A).
\end{equation}
 In particular, since $\Hom_\Sigma(\Tot(D^{*,*}))\cong \Tot(\Hom_\Sigma(D^{*,*}))$, then,
\begin{equation}\label{OpextRelativeCohomologyEq}\Opext_{\rhd,\lhd}(kF,k^G)\cong H^1(\Tot(\Hom_\Sigma(D^{*,*})))\cong H^3_\mathscr{A}(\Sigma,X;k^\times).\end{equation}
\end{proof}

As a consequence of Theorem \ref{OpextRelativeCohomology} and the long exact sequence \eqref{long-sequence relative} we obtain the Kac's exact sequence (see \cite{Masuoka-Calculations,Kac}).
\begin{corollary}[Kac's exact sequence]\label{Kac exact sequence}

For a fixed matched pair of groups $(G,F,\rhd,\lhd)$, we have a long exact sequence
\end{corollary}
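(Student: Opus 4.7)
The plan is to derive the Kac exact sequence as a direct instance of the long exact sequence \eqref{long-sequence relative} for Auslander relative cohomology, after plugging in the specific $\Sigma$-set $X = F \sqcup G$ and using Theorem \ref{OpextRelativeCohomology} to identify one of the terms with $\Opext_{\rhd,\lhd}(kF,k^G)$.

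First, I would analyze the orbit decomposition of $X = F \sqcup G$ under the $\Sigma$-action. The two pieces $F$ and $G$ are clearly $\Sigma$-stable (the action is defined piecewise on each summand), so one has to check that each of them is a single orbit. Since the matched pair actions come from the factorization $\Sigma = GF$, the left $\Sigma$-action on $F \cong \Sigma/G$ is transitive with stabilizer of $e \in F$ equal to $G$, and similarly the action on $G \cong F\backslash\Sigma$ (transported to a left action via the inverse, as indicated before Proposition \ref{ResDelta}) is transitive with stabilizer of $e \in G$ equal to $F$. A set of orbit representatives is therefore $\mathcal{R}_X = \{e_F, e_G\}$ with stabilizers $G$ and $F$ respectively, and Shapiro's lemma gives
\[
\prod_{x\in \mathcal{R}_X} H^k(\operatorname{St}(x),k^\times) \cong H^k(G,k^\times) \oplus H^k(F,k^\times).
\]

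Next, I would apply the long exact sequence \eqref{long-sequence relative} with coefficients in $k^\times$ as a trivial $\Sigma$-module. Substituting the identification above on the middle term yields
\[
\cdots \to H^k(\Sigma,k^\times) \to H^k(G,k^\times)\oplus H^k(F,k^\times) \to H^{k+1}_{\mathscr{A}}(\Sigma,X;k^\times) \to H^{k+1}(\Sigma,k^\times) \to \cdots
\]
Finally, I would invoke Theorem \ref{OpextRelativeCohomology} to replace $H^3_\mathscr{A}(\Sigma,X;k^\times)$ by $\Opext_{\rhd,\lhd}(kF,k^G)$, producing the familiar Kac exact sequence whose distinguished term (in the degree where $k=2$) is the group of equivalence classes of abelian extensions.

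There is no real obstacle here: all the heavy lifting has been done in Theorem \ref{OpextRelativeCohomology} (and in Proposition \ref{ResDelta}, which identifies the kernel $\Lambda$ of the relevant augmentation with $\Lambda_X$). The only mildly delicate point is book-keeping the sign/convention issue arising from viewing $G$ as a left $\Sigma$-set via the inverse of its natural right $\Sigma$-action, so that $F \sqcup G$ becomes a genuine left $\Sigma$-set; one must verify that under this convention the stabilizer of $e_G$ is indeed $F$, so the middle term of the sequence comes out symmetric in $F$ and $G$.
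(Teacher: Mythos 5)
Your proposal is correct and is exactly the argument the paper intends: the authors state the corollary as an immediate consequence of the long exact sequence \eqref{long-sequence relative} applied to the $\Sigma$-set $X=F\sqcup G$ (whose two orbits have stabilizers $G$ and $F$, giving the middle terms via Shapiro's lemma) together with the identification $H^3_{\mathscr{A}}(\Sigma,X;k^\times)\cong\Opext_{\rhd,\lhd}(kF,k^G)$ from Theorem \ref{OpextRelativeCohomology}. Your additional care with the orbit/stabilizer bookkeeping and the left-action convention on $G$ only makes explicit what the paper leaves implicit.
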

\begin{align*}
       0&\to H^1(F\bowtie G,k^\times)\to H^1(F,k^\times)\oplus H^1(G,k^\times)\to H^3_\mathscr{A}(\Sigma,X;k^\times)\\
       &\to H^2(F\bowtie G,k^\times)\to H^2(F,k^\times)\oplus H^2(G,k^\times)\to\Opext_{\rhd,\lhd}(kF,k^G)\\
       &\to H^3(F\bowtie G,k^\times)\to H^3(F,k^\times)\oplus H^3(G,k^\times)\to H^4_\mathscr{A}(\Sigma,X;k^\times).
   \end{align*}

\section{The five-term exact sequence for Kac double complex}\label{Section:Five-term exact sequence}

The group $\Opext_{\rhd,\lhd}(kF,k^G)$ can be obtained, as described in \cite{Masuoka-Calculations}, as the first cohomology group of a double complex, which can be computed by means of a spectral sequence. We compute the first pages of the spectral sequence associated to the double complex $D^{**}$ in $\eqref{TruncateComplex}$, which is a particular case of the double complex of Kac. The five-term exact sequence for this spectral sequence will be useful for computing the group $\Opext_{\rhd,\lhd}(kF,k^G)$ for different kinds of matched pairs. 
\subsection{Spectral sequence of a double complex}
Through this section we deal with a first quadrant double complex, that is, a double complex $M^{p,q}$ such that $M^{p,q}=\{0\}$ when $p,q< 0$. There is a spectral sequence associated to a first quadrant double complex, whose first pages are obtained taking vertical and horizontal cohomology of the double complex.\\

Let $M^{*,*}$ be a first quadrant double complex with vertical and horizontal differentials given by $\delta_v,\delta_h$. Let $\Tot(M^{*,*})$ be the total complex associated to $M^{*,*}$.  
 There is a spectral sequence $(E_r^{*,*},d_r)$ with differentials $d_r^{p,q}:E_r^{pq}\to E_r^{p+r,q-r+1}$, which converges to $H^*(\Tot(M^{*,*}))$, whose first pages are given by
\begin{align*}
E_0^{p,q}=M^{p,q},&&
E_1^{p,q}=H^q(M^{p,*},d_0),&& 
E_2^{p,q}=H^p(E_1^{*,q},d_1),
\end{align*}see \cite{McCleary} for more details. 
The differentials for each page are given by
\begin{align}\label{Differentials}
d_0^{p,q}=d_v,&&
d_1^{p,q}=d_h',&&
d_2^{p,q}(\bar{\alpha})=\overline{d_h(\gamma)},
\end{align}
where $d_h'$ is the differential induced by $d_h$ on $H^q(M^{p,*},d_0)$ and $\gamma\in M^{p+1,q-1}$ is such that $d_h(\alpha)=d_v(\gamma)$.
 Associated to the spectral sequence $(E_r^{*,*},d_r)$, there is a five-term exact sequence
\begin{equation}\label{FTES0}
0{\rightarrow}E_2^{1,0}\stackrel{i}{\to} H^1(\Tot(M^{*,*}))\stackrel{p}{\to}E_2^{0,1}\stackrel{d_2^{0,1}}{\to}E_2^{2,0}\stackrel{i}{\to}H^2(\Tot(M^{*,*})),
\end{equation}
where $i$ is a restriction map, the map $p$ is a projection map.

\subsection{The five-term exact sequence}
Given a matched pair $(G,F,\rhd,\lhd)$ we compute a five-term exact sequence to calculate $\Opext_{\rhd,\lhd}(kF,k^G)$. 


\begin{theorem}\label{FirstSecondPage}
Let $\Sigma=F\bowtie G$ be a matched pair and $A$ be a $\Sigma$-module. The first and second pages of the spectral sequence associated to the double complex $\Hom_\Sigma(D^{*,*},A)$, where $D^{i,j}:=P_{i+1}\otimes Q_{j+1}\mbox{ for }i,j\geq 0,$ is the double complex defined in Proposition \ref{ResDelta}, are given by
\begin{align*}
    E_1^{i,n}=H^n(\Sigma,G;\Hom(P_i,A)), && E_2^{n,m}=H^m(H^n(\Sigma,G;\Hom(P_*,A))),
\end{align*}
The first page does not depend on the resolution $Q_*$ and the second page does not depend neither on the resolution $P_*$ nor on the resolution $Q_*$.
\end{theorem}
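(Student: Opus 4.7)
The plan is to compute the two pages by first rewriting the double complex via the tensor--hom adjunction, then identifying the vertical cohomology with Hochschild relative cohomology, and finally deducing the independence statements from the usual comparison theorem for relatively projective resolutions.

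First I would apply the natural isomorphism
\[
\Hom_\Sigma\bigl(P_{i+1}\otimes Q_{j+1},\,A\bigr)\;\cong\;\Hom_\Sigma\bigl(Q_{j+1},\,\Hom(P_{i+1},A)\bigr),
\]
where $\Hom(P_{i+1},A):=\Hom_{\Z}(P_{i+1},A)$ carries the diagonal $\Sigma$-action. With this, for each fixed $i$ the $i$-th column of the cochain double complex $\Hom_\Sigma(D^{*,*},A)$ is obtained by applying $\Hom_\Sigma(-,\Hom(P_{i+1},A))$ to the shifted complex $Q_{*+1}$. Since this shift is itself a relatively $G$-projective resolution of $\Lambda_Q:=\ker(\epsilon_Q)$ and since $Q_0$ is relatively $G$-projective (so its relative $\Ext$ in positive degree vanishes), dimension shifting along the short exact sequence $0\to\Lambda_Q\to Q_0\to\Z\to 0$ identifies the cohomology of the $i$-th column with the Hochschild relative cohomology $H^n(\Sigma,G;\Hom(P_i,A))$ in the indexing used in the statement. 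This yields the formula for $E_1^{i,n}$; its independence from the choice of $Q_*$ is then automatic, since $H^n(\Sigma,G;-)$ does not depend on the relatively $G$-projective resolution used to compute it.

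For the second page, the horizontal differential of $D^{*,*}$ is $\delta_P\otimes\id_{Q}$, which under the adjunction corresponds on $E_1^{*,n}$ to the map on $H^n(\Sigma,G;-)$ induced by functoriality from $\delta_P:P_{i+2}\to P_{i+1}$. Hence the row $E_1^{*,n}$ is precisely the cochain complex $H^n(\Sigma,G;\Hom(P_*,A))$, and
\[
E_2^{n,m}=H^m\bigl(H^n(\Sigma,G;\Hom(P_*,A))\bigr).
\]
The independence of $E_2$ from $P_*$ then follows from the standard comparison theorem: any two relatively $F$-projective resolutions of $\Z$ are chain-homotopy equivalent as complexes of $\Sigma$-modules; the functor $\Hom(-,A)$ sends this equivalence to a cochain-homotopy equivalence; and $H^n(\Sigma,G;-)$, being additive and applied termwise, preserves cochain-homotopy equivalences. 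Therefore the resulting cochain complexes $H^n(\Sigma,G;\Hom(P_*,A))$ are homotopy equivalent for any two choices of $P_*$, and so their $H^m$ agree.

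The main obstacle is the indexing bookkeeping caused by the truncation $D^{i,j}=P_{i+1}\otimes Q_{j+1}$: the $i$-th column is a shift of $\Hom_\Sigma(Q_*,-)$ rather than the full complex, so one has to pass through $\Lambda_Q$ and verify that the resulting shifted $\Ext$ groups match, under the conventions used, the Hochschild relative cohomology $H^n(\Sigma,G;\Hom(P_i,A))$ appearing in the statement. Once this identification is pinned down, the rest of the argument is a formal consequence of the tensor--hom adjunction and the preservation of chain-homotopies by additive functors.
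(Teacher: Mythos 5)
Your proposal follows essentially the same route as the paper: the tensor--hom adjunction $\Hom_\Sigma(P_i\otimes Q_j,A)\cong\Hom_\Sigma(Q_j,\Hom_{\Z}(P_i,A))$ identifies the columns, the vertical cohomology is recognized as Hochschild relative cohomology (so independence from $Q_*$ is automatic), and independence of $E_2$ from $P_*$ is deduced from the comparison theorem for relatively projective resolutions together with the additivity of the functor $H^n(\Sigma,G;\Hom(-,A))$. Your explicit handling of the degree shift coming from the truncation $D^{i,j}=P_{i+1}\otimes Q_{j+1}$ (passing through $\Lambda_Q$ and dimension shifting) is in fact more careful than the paper's, which glosses over that bookkeeping.
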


\begin{proof}
The double complex $\Hom_\Sigma(D^{*,*},A)$ with only the vertical differentials is the $0^{th}$-page of the spectral sequence:
\begin{equation}\label{HomSigma}E^{i,j}_0:=\Hom_{\Sigma}(P_i\otimes Q_j,A)\cong \Hom_{\Sigma}(Q_j,\Hom_\mathbb{Z}(P_i,A)).\end{equation}
Since $(\mathbb{Z}\stackrel{\epsilon_Q}{\leftarrow} Q_i,\delta_i')$ is a relatively projective resolution of $\mathbb{Z}$, the first page of the spectral sequence is

\[E_1^{i,n}=H^n(\Sigma, G, \Hom(P_i,A))=F_n(P_i).\]
where $F_n:\Sigma$-$Mod$ to $Ab$ is the functor given by $H_1^n(\Sigma,G;\Hom(-,A))$. Hence it does not depend on the resolution $Q_i$. The diagram for the first page is given below.
\begin{equation*}
\xymatrix{
&\cdots  &\cdots &\cdots  \\
&F_2(P_0) \ar[r]^{F_2(\delta_1)}   &F_2(P_1) 
\ar[r]^{F_2(\delta_2)}&F_2(P_2) \ar[r]^{F_2(\delta_3)}& \cdots\\
&F_1(P_0) \ar[r]^{F_1(\delta_1)}  &F_1(P_1) \ar[r]^{F_1(\delta_2)} &F_1(P_2) \ar[r]^{F_1(\delta_3)} &\cdots\\
&F_0(P_0)\ar[r]^{F_0(\delta_1)} &F_0(P_1) \ar[r]^{F_0(\delta_2)} &F_0(P_1) \ar[r]^{F_0(\delta_3)} &\cdots}
\end{equation*}
where $F_k(\delta_i)$ is the induced differential in relative cohomology. Then, the second page is \[E_2^{n,m}=H^m(F_n(P_*)).\] 
To see that the objects in the second page do not depend on the resolution, let $P_i'$ be another relatively $F$-projective resolution of $\mathbb{Z}$. Then, there exists a homotopy equivalence $f:P_i\to P'_i$ as $F$-modules, that is, there exists $g:P'_i\to P_i$ and $h_i:P_i\to P_{i-1}$ such that  
\[\delta_i h+h\delta_i=fg-\id.\]
Since the functor $F_n$ is additive, we get
\[F_n(\delta_i)F_n(h_i)+F_n(h_i)F_n(\delta_i)=F_n(f)F_n(f^{-1})-F_n(\id)\]
for each $n$, so the map $F_n(f)$ is a homotopy equivalence between the resolutions $F_n(P_i)$ and $F_n(P_i')$. This means that the second page, which consist on the cohomology groups of the resolutions $F_n(P_i)$ and $F_n(P'_i)$ with the respective induced differentials, is isomorphic to the first one.
\end{proof}

\begin{theorem}\label{FTESKac}
Let $(G,F,\rhd,\lhd)$ be a matched pair where the action $\rhd$ is trivial and let $k$ be a field. The spectral sequence in Theorem \ref{FirstSecondPage} associated to the group $\Sigma=F\bowtie G$ has second page given by 

\begin{align*}
E_2^{p,q}=&H^{p+1}(F,H^{q+1}(G,k^\times)),&&
E_2^{p,0}\cong H^{p+1}(F,\widehat{G}) \label{SecondPage}\\
E_2^{0,q}\cong& \Der(F,H^{q+1}(G,k^\times)),&&
E_2^{0,0}=\Der(F,\widehat{G})\notag
\end{align*}
for $p\geq 1, q\geq 1$.
Therefore, we have the five-term exact sequence:
\begin{equation}\label{FTES}
\begin{aligned}
0{\rightarrow}H^2(F,\widehat{G}))\stackrel{i}{\to} & \Opext_{\rhd,\lhd}(kF,k^G)\stackrel{\pi}{\to}\Der(F,H^2(G,k^\times))\\
&\stackrel{d_2}{\to}H^3(F,\widehat{G}){\to}H^4_\mathscr{A}(\Sigma,X,k^\times).
\end{aligned}
\end{equation}
\end{theorem}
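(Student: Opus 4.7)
The plan is to identify the truncated bicomplex $\Hom_\Sigma(D^{*,*},k^\times)$ with the truncated Lyndon--Hochschild--Serre bicomplex for $\Sigma=F\ltimes G$ (which is available because $\rhd$ trivial makes $G$ a normal subgroup with quotient $F$), and then work through the two pages. By Theorem \ref{FirstSecondPage} the $E_2$-page is independent of the choice of relatively projective resolutions, so I would take $P_*$ to be the normalized bar resolution of $\Z$ as a $\Z F$-module, viewed as a $\Sigma$-resolution by inflation along $\Sigma\to F$: formula \eqref{formula:action2} shows the $\Sigma$-action factors through $F$ when $\rhd$ is trivial, so $G$ acts trivially on $P_*$. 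I would take $Q_*$ to be the normalized bar resolution of $\Z$ as a $\Z G$-module, with the $\Sigma$-action of \eqref{formula:action}, which under trivial $\rhd$ reduces to the diagonal action of $F$ via $\lhd$ (by group automorphisms) together with translation by $G$.

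The core step is the natural isomorphism of bicomplexes
\[
\Hom_\Sigma(P_{i+1}\otimes Q_{j+1},k^\times)\;\cong\;C^{i+1}\!\bigl(F,\,C^{j+1}(G,k^\times)\bigr),
\]
whose horizontal and vertical differentials match the usual $F$- and $G$-Hochschild cochain differentials on the right. To establish it, first rewrite by adjunction: $\Hom_\Sigma(P_{i+1}\otimes Q_{j+1},k^\times)=\Hom_\Sigma(P_{i+1},\Hom_\Z(Q_{j+1},k^\times))$. Because $G$ acts trivially on $P_{i+1}$, $\Sigma$-equivariance forces the image of any such map to lie in the $G$-invariants, so this equals $\Hom_F(P_{i+1},\Hom_\Z(Q_{j+1},k^\times)^G)$. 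Since $Q_{j+1}$ is $G$-free on the normalized bar basis $\bar G^{j+1}$, a direct computation gives $\Hom_\Z(Q_{j+1},k^\times)^G=\Map(\bar G^{j+1},k^\times)=C^{j+1}(G,k^\times)$ with $F$ acting through $\lhd$; then the fact that $P_{i+1}$ is $F$-free on $\bar F^{i+1}$ completes the identification. In particular, $E_0$ is precisely the classical Lyndon--Hochschild--Serre bicomplex for $\Sigma=F\ltimes G$ with the $p=0$ row and $q=0$ column removed.

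The remaining steps are routine. For $E_1$, the functor $\Map(\bar F^{i+1},-)$ is exact and hence commutes with vertical ($G$-)cohomology; since the shifted $G$-cochain complex $C^{*+1}(G,k^\times)$ has $H^0=Z^1(G,k^\times)=\widehat G$ and $H^q=H^{q+1}(G,k^\times)$ for $q\geq 1$, this yields $E_1^{i,0}=C^{i+1}(F,\widehat G)$ and $E_1^{i,q}=C^{i+1}(F,H^{q+1}(G,k^\times))$. The horizontal differential (coming from $\delta_P$) is the $F$-cochain differential, so $E_2^{p,q}$ is the cohomology of the shifted $F$-cochain complex $C^{*+1}(F,V_q)$, which delivers $\Der(F,V_q)$ at $p=0$ and $H^{p+1}(F,V_q)$ for $p\geq 1$, matching the stated $E_2$-page. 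For the five-term sequence, substitute $E_2^{1,0}=H^2(F,\widehat G)$, $E_2^{0,1}=\Der(F,H^2(G,k^\times))$ and $E_2^{2,0}=H^3(F,\widehat G)$ into \eqref{FTES0}, and use Theorem \ref{OpextRelativeCohomology} together with equation \eqref{RelativeTot} to rewrite $H^1(\Tot D^{*,*})\cong\Opext_{\rhd,\lhd}(kF,k^G)$ and $H^2(\Tot D^{*,*})\cong H^4_\mathscr{A}(\Sigma,X;k^\times)$.

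The main obstacle will be the core identification above: it requires carefully tracking both $\Sigma$-actions under trivial $\rhd$, justifying the reduction to $G$-invariants, and verifying that the resulting horizontal and vertical differentials coincide with the classical Hochschild differentials of $F$ and $G$.
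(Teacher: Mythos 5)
Your proposal is correct and follows essentially the same route as the paper: both identify the truncated bicomplex $\Hom_\Sigma(D^{*,*},k^\times)$ with the row- and column-truncated Lyndon--Hochschild--Serre bicomplex $C^{i+1}(F,C^{j+1}(G,k^\times))$ for $\Sigma=F\ltimes G$, take vertical ($G$-) cohomology and then horizontal ($F$-) cohomology to read off $E_1$ and $E_2$, and substitute into the generic five-term sequence \eqref{FTES0} using Theorem \ref{OpextRelativeCohomology} and \eqref{RelativeTot}. The only cosmetic difference is that you obtain the identification by adjunction and passage to $G$-invariants, whereas the paper writes out the cochain groups $\Map_+(G^{q+1}\times F^{p+1},k^\times)$ and their vertical and horizontal differentials explicitly.
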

\begin{proof}
According to \eqref{HomSigma}, the $0^{th}$-page is given by

\begin{equation*}E^{i,j}_0:=\Hom_{\Sigma}(P_i\otimes Q_j,k^\times)\cong \Hom_{\Sigma}(Q_j,\Hom_\mathbb{Z}(P_i,k^\times)).\end{equation*}
If we take $(\mathbb{Z}\stackrel{\epsilon_P}{\leftarrow} P_i,\delta_i)$ and $(\mathbb{Z}\stackrel{\epsilon_Q}{\leftarrow} Q_i,\delta_i´')$ to be the resolutions in Proposition \ref{prop:resultion relativa matched}, then we have the group isomorphism
\begin{equation*}\label{HomMap}
\Hom_{\Sigma}(P_i\otimes Q_j,k^\times)\cong \Map_+(G^{q+1}\times F^{p+1},k^\times)\end{equation*}
and the vertical and horizontal differentials of the double complex of groups 
$$\Map_+(G^{q+1}\times F^{p+1},k^\times)$$ are respectively given by

\begin{equation*}
\begin{aligned}\label{VDifferentials}
\delta_i f(s_{i+1},\cdots,s_1;x_1,\cdots,x_{p})^{(-1)^p}=&f(s_{i+1},\cdots,s_2;s_1\rhd x_1,(s_1\lhd x_1)\rhd x_2,\cdots\\
& \cdots ,(s_1\lhd x_1\cdots x_{p-1})\rhd x_p)\times \\
&\prod_{k=1}^{i}f(s_{i+1},\cdots,s_{i+1}s_i,\cdots,s_1;x_1,\cdots,x_p)^{(-1)^k}\times \\
&f(s_i,\cdots,s_1;x_1,\cdots,x_p)^{(-1)^{q+1}}.
\end{aligned}
\end{equation*}
 \begin{equation*}
\begin{aligned}\label{HDifferentials}
\delta_i' f(s_q,\cdots,s_1;x_1,\cdots,x_{i+1})=&f(s_q\lhd(s_{q-1}\cdots s_1\rhd x_1),\cdots\\
&\cdots ,s_2\lhd(s_1\rhd x_1),s_1\lhd x_1;x_2,\cdots,x_{p+1})\times\\
&\prod_{k=1}^{i}f(s_q,\cdots,s_1;x_1,\cdots,x_ix_{i+1},\cdots,x_{i+1})^{(-1)^k}\times \\
&\  \  \  \  \ \ f(s_q,\cdots,s_1;x_1,\cdots,x_i)^{(-1)^{i+1}}.
\end{aligned}
\end{equation*}
Since the action $\lhd$ is trivial, the vertical  differentials are given by

\begin{equation*}\label{VDifferentials2}
\begin{aligned}
\delta (f)(s_{q+1},\cdots,s_1;x_1,\cdots,x_{p})^{(-1)^p}=&f(s_{q+1},\cdots,s_2;x_1,\cdots,x_p)\\&\prod_{i=1}^{q}f(s_{q+1},\cdots,s_{i+1}s_i,\cdots,s_1;x_1,\cdots,x_p)^{(-1)^i}\\
&f(s_q,\cdots,s_1;x_1,\cdots,x_p)^{(-1)^{q+1}}.
\end{aligned}
\end{equation*}
We have that $\Map_+(G^{q}\times F^{p},k^\times)\cong C^q(G,C^p(F,k^\times))$, where $C^q(G,C^p(F,k^\times))$ denotes the group of functions $f:G^q\to C^p(F,k^\times)$ (with a normalization property) and the group $G$ acts trivially on the group $C^p(F,k^\times)$.

Taking vertical cohomology to the $0$th-page, we get $H^q(G,C^p(F,k^\times))$. Therefore, the first page $E_1^{*,*}$ of the spectral sequence is given by
\begin{align*}E_1^{p,q}&=H^{q+1}(G,C^{p+1}(F,k^\times))\cong C^{p+1}(F,H^{q+1}(G,k^\times))\mbox{   for }q\geq 1.\\
E_1^{p,0}&=\Der(G,C^{p+1}(F,k^\times))\cong C^{p+1}(F,\Der(G,k^\times)),\notag\end{align*}
where the isomorphisms hold since the vertical differential leaves every element of $F$ fixed. On the other hand, the horizontal differentials are given by

\begin{equation*}\label{HDifferentials2}
\begin{aligned}
\delta'(f)(s_q,\cdots,s_1;x_1,\cdots,x_{p+1})=&f(s_q\lhd x_1,\cdots,s_1\lhd x_1;x_2,\cdots,x_{p+1})\\
&\prod_{i=1}^{p}f(s_q,\cdots,s_1;x_1,\cdots,x_ix_{i+1},\cdots,x_{p+1})^{(-1)^i}\\
&f(s_q,\cdots,s_1;x_1,\cdots,x_p)^{(-1)^{p+1}},
\end{aligned}
\end{equation*}
by differentiating each row by the induced horizontal differentials,

\begin{align}
E_2^{p,q}=&H^{p+1}(F,H^{q+1}(G,k^\times)),&&
E_2^{p,0}\cong H^{p+1}(F,\widehat{G})\\
E_2^{0,q}\cong& \Der(F,H^{q+1}(G,k^\times)),&&
E_2^{0,0}=\Der(F,\widehat{G})\notag
\end{align}
Since $k^\times$ is a trivial $G$-module, the sequence \eqref{FTES0} turns into
\begin{equation*}
\begin{aligned}
0{\rightarrow}H^2(F,\Der(G,k^\times))\stackrel{i}{\to} &H^1(\Tot(\Hom_{\Sigma}(P_i\otimes Q_j,k^\times)))\stackrel{p}{\to}\Der(F,H^2(G,k^\times))\\
&\stackrel{d_2^{0,1}}{\to}H^3(F,\Der(G,k^\times))\stackrel{i}{\to}H^2(\Tot(\Hom_{\Sigma}(P_i\otimes Q_j,k^\times))),
\end{aligned}
\end{equation*}
From \eqref{RelativeTot} and \eqref{OpextRelativeCohomologyEq} we get the five-term exact sequence

\begin{equation*}
\begin{aligned}
0{\rightarrow}H^2(F,\widehat{G})\stackrel{}{\to} & \Opext_{\rhd,\lhd}(kF,k^G){\to}\Der(F,H^2(G,k^\times))\\
&\stackrel{d_2^{0,1}}{\to}H^3(F,\widehat{G}){\to}H^4(\Sigma,X,k^\times).
\end{aligned}
\end{equation*}
\end{proof}
Note that, in the case that $\rhd$ is a trivial action, the terms $E_2^{p,q}$ with $p\geq 1,q\geq 1$ of the second page of the spectral sequence associated to the semidirect product $\Sigma=F\ltimes G$ coincides with the second page of the Lyndon-Hochschild-Serre spectral sequence \cite{Evens}. 
\begin{corollary} Let $(G,F,\rhd,\lhd)$ be a matched pair with trivial $\rhd$ action and let $k$ be a field. Then,
\begin{enumerate}
\item If $H^2(G,k^\times)=1$ then   $\Opext_{\rhd,\lhd}(kF,k^G)\cong  H^2(F,\widehat{G}).$
\item If $(|F|,|\widehat{G}|)=1$, then $\Opext_{\rhd,\lhd}(kF,k^G)\cong\Der(F,H^2(G,k^\times)).$
\item If $G$ is a perfect group, then $    \Opext_{\rhd,\lhd}(kF,k^G)=\Der(F,H^2(G,k^\times))$
\item If $|F|=2k+1$ and $G=S_n$ with $n\geq 4$, then, $\Opext_{\rhd,\lhd}(\mathbb{C}F,\mathbb{C}^G)=\Hom(F,\mathbb{Z}/2)$.
\end{enumerate}
\end{corollary}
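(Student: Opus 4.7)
The plan is to deduce all four items directly from the five-term exact sequence
\begin{equation*}
0\to H^2(F,\widehat{G})\to \Opext_{\rhd,\lhd}(kF,k^G)\to \Der(F,H^2(G,k^\times))\xrightarrow{d_2} H^3(F,\widehat{G})\to H^4_{\mathscr{A}}(\Sigma,X;k^\times)
\end{equation*}
established in Theorem \ref{FTESKac}, checking in each case that either the left term $H^2(F,\widehat{G})$, the third term $\Der(F,H^2(G,k^\times))$, or both $H^n(F,\widehat{G})$ terms vanish. For (1), the hypothesis $H^2(G,k^\times)=1$ immediately forces $\Der(F,H^2(G,k^\times))=0$, and the sequence collapses to the isomorphism $\Opext_{\rhd,\lhd}(kF,k^G)\cong H^2(F,\widehat{G})$.

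For (2), I would invoke the classical fact that for a finite group $F$ and any $F$-module $M$, the cohomology $H^n(F,M)$ with $n\geq 1$ is annihilated by both $|F|$ (via the transfer/corestriction) and $|M|$; the coprimality $\gcd(|F|,|\widehat{G}|)=1$ then forces $H^2(F,\widehat{G})=H^3(F,\widehat{G})=0$, and the five-term sequence yields $\Opext_{\rhd,\lhd}(kF,k^G)\cong \Der(F,H^2(G,k^\times))$. For (3), perfectness of $G$ is equivalent to $G^{\mathrm{ab}}=1$, so $\widehat{G}=\Hom(G^{\mathrm{ab}},k^\times)=1$; both flanking $H^n(F,\widehat{G})$ terms vanish, giving the same conclusion as in (2).

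For (4), I would assemble the classical computations $S_n^{\mathrm{ab}}=\mathbb{Z}/2$ (sign) and $H^2(S_n,\mathbb{C}^\times)=\mathbb{Z}/2$ (Schur multiplier, for $n\geq 4$), so that $\widehat{S_n}=\mathbb{Z}/2$. The oddness of $|F|=2k+1$ gives $\gcd(|F|,|\widehat{S_n}|)=1$, and item (2) then produces $\Opext_{\rhd,\lhd}(\mathbb{C}F,\mathbb{C}^G)\cong \Der(F,\mathbb{Z}/2)$. The whole derivation is essentially mechanical once Theorem \ref{FTESKac} is available; the only subtle point is the final step in (4), where one observes that $\Aut(\mathbb{Z}/2)$ is trivial, so the $F$-action on $H^2(S_n,\mathbb{C}^\times)$ induced by $\lhd$ is necessarily trivial, and hence $\Der(F,\mathbb{Z}/2)=\Hom(F,\mathbb{Z}/2)$.
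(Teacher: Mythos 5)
Your proposal is correct and follows essentially the same route as the paper: all four items are read off from the five-term exact sequence of Theorem \ref{FTESKac} by checking vanishing of the flanking terms $H^2(F,\widehat{G})$, $H^3(F,\widehat{G})$, or of $\Der(F,H^2(G,k^\times))$, with the coprimality argument via transfer in (2), $\widehat{G}=1$ for perfect $G$ in (3), and the Schur multiplier $H^2(S_n,\mathbb{C}^\times)=\mathbb{Z}/2$ together with the triviality of the action on $\mathbb{Z}/2$ in (4). Your write-up is in fact slightly more explicit than the paper's (which leaves the transfer argument and the identification $\Der=\Hom$ implicit), but there is no substantive difference.
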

\begin{proof}
Part $(1)$ is straightforward.
$(2)$:  Since $(|F|,|\widehat{G}|)=1$, then $H^n(F,\widehat{G})=H^n(F,\widehat{G})=\{1\}$ and the result holds.\\

$(3)$: Since the abelianization of $G$ is trivial, then $\widehat{G}\cong\{0\}$. Therefore, $H^2(F,\widehat{G})=H^2(F,\widehat{G})=\{0\}$, similarly $H^3(F,\widehat{G})=0$, then, $\Opext_{\rhd,\lhd}(kF,k^G)=\Der(F,H^2(G,k^\times))$.\\

$(4)$: Under the given conditions it holds $(|F|,|\widehat{G}|)=1$, as in $(3)$. So $\Opext_{\rhd,\lhd}(kF,k^G)=\Der(F,H^2(G,k^\times))$. Now, $H^2(S_n,k^\times)= \mathbb{Z}/2$, for $n\geq 4$ and $\Der(F,H^2(G,k^\times))=\Hom(F,H^2(G,k^\times))$, so the result holds.
\end{proof}

\section{Computations}\label{Section:Computations}

We compute some examples of the group $\Opext_{\rhd,\lhd}(kF,k^G)$ for different semidirect products: the right action $\rhd$ is trivial, so we denote the group $\Opext_{\rhd\lhd}(kF,k^G)$ by $\Opext_\lhd(kF,k^G)$). The first calculation generalizes one from Masuoka in \cite{Masuoka2}.
\begin{theorem}
Let $k$ be a field. Let $G$ be a  group and  $\mathbb{Z}/2\ltimes(G\times G)$ be the the semidirect product with $(a,b)\lhd 1=(b,a).$
  Then
  $$\Opext_\lhd(k\mathbb{Z}/2,k^{G\times G})\cong H^2(G,k^\times )\oplus P(G,G;k^\times).$$
  \end{theorem}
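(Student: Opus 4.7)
\smallskip
\textbf{Plan.} Since this is a semidirect product, the $\rhd$-action is trivial and Theorem \ref{FTESKac} applies to the matched pair with $F=\mathbb Z/2$, with $G\times G$ playing the role of the normal factor, and with $\lhd$ given by the swap $\tau\colon (a,b)\mapsto (b,a)$. I would plug this data into the five-term exact sequence \eqref{FTES} and try to force the outer terms to vanish so that $\Opext$ is computed as an honest group of derivations, then use the Karpilovsky decomposition \eqref{Karpilovski} to identify that group of derivations with the stated direct sum.

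\smallskip
\textbf{Step 1: vanishing of the outer terms.} The $\mathbb Z/2$-module $\widehat{G\times G}\cong \widehat G\oplus \widehat G$ carries the swap action $\tau(a,b)=(b,a)$. Using the periodic resolution \eqref{CyclicResolution} together with formula \eqref{CyclicCohomology}, I would observe that $N(a,b)=(a+b,a+b)$ has image equal to the diagonal $\Delta\widehat G$, which is exactly $(\widehat G\oplus \widehat G)^{\mathbb Z/2}$; hence $H^{2k}(\mathbb Z/2,\widehat{G\times G})=0$ for $k\geq 1$. Similarly $\ker N=\{(a,-a):a\in\widehat G\}$ coincides with $\operatorname{Im}(\tau-1)$, so $H^{2k+1}(\mathbb Z/2,\widehat{G\times G})=0$. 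In particular $H^2$ and $H^3$ both vanish, and the five-term sequence \eqref{FTES} collapses to an isomorphism
\[
\Opext_\lhd(k\mathbb Z/2,k^{G\times G})\;\cong\;\Der\bigl(\mathbb Z/2,\,H^2(G\times G,k^\times)\bigr).
\]

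\smallskip
\textbf{Step 2: identifying the derivations.} A derivation $d\colon \mathbb Z/2\to M$ is determined by $m:=d(t)$ subject to $(1+\tau)m=0$, so I would compute $\ker(1+\tau)$ inside $H^2(G\times G,k^\times)$. Applying \eqref{Karpilovski} one writes
\[
H^2(G\times G,k^\times)\cong H^2(G,k^\times)\oplus H^2(G,k^\times)\oplus P(G,G;k^\times),
\]
and the swap acts on the first two summands by interchanging them. The cocycle representative $\alpha((x_1,y_1),(x_2,y_2))=\alpha_1(x_1,x_2)\alpha_2(y_1,y_2)\phi(x_1,y_2)$ given in the paper is used to transport $\tau^*$ to the triple, so a derivation is a triple $(a_1,a_2,\phi)$ satisfying $a_1+a_2=0$ together with a condition on $\phi$ read off from $\operatorname{Alt}(\tau^*\alpha)$. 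Projecting $(a,-a,\phi)\mapsto (a,\phi)$ identifies the kernel with $H^2(G,k^\times)\oplus P(G,G;k^\times)$, giving the claimed isomorphism.

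\smallskip
\textbf{Main obstacle.} The delicate step is computing how $\tau^*$ acts on the bicharacter summand under the chosen cocycle representative: the representative $\phi(x_1,y_2)$ is not swap-symmetric, and after pullback by $\tau$ one must rewrite the resulting cocycle (using an explicit coboundary) in the canonical form of \eqref{Karpilovski} to read off the action on $\phi$. Once this identification is carried out, taking the kernel of $1+\tau$ on the decomposition is a routine linear computation, and together with Step 1 it yields
$\Opext_\lhd(k\mathbb Z/2,k^{G\times G})\cong H^2(G,k^\times)\oplus P(G,G;k^\times)$.
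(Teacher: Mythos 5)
Your Step 1 is correct and coincides with what the paper does: the explicit identifications $\Img(N)=\Delta\widehat{G}=(\widehat{G}\oplus\widehat{G})^{\mathbb{Z}/2}$ and $\Ker(N)=\{(a,-a)\}=\Img(\tau-1)$ justify the paper's terser appeal to \eqref{CyclicCohomology}, and the collapse of \eqref{FTES} to $\Opext_\lhd(k\mathbb{Z}/2,k^{G\times G})\cong\Der(\mathbb{Z}/2,H^2(G\times G,k^\times))=\Ker(1+\tau^*)$ is exactly the paper's first reduction.

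The gap is in Step 2, and it is not a routine one: the computation you defer to the ``main obstacle'' paragraph is precisely the computation on which the truth of the statement hinges. The outcome depends on whether $\tau^*$ acts on the summand $P(G,G;k^\times)$ of \eqref{Karpilovski} by $\phi\mapsto -\phi$ or by $\phi\mapsto -\phi\circ\mathrm{flip}$ (where $\mathrm{flip}$ interchanges the two arguments). In the first case $1+\tau^*$ annihilates the whole summand and you recover all of $P(G,G;k^\times)$; in the second case $\Ker(1+\tau^*)$ meets that summand only in the symmetric pairings, a proper subgroup as soon as $G^{\mathrm{ab}}$ has rank at least $2$ — so the two candidate answers genuinely differ. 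Moreover, the naive route you sketch leads to the second formula: pulling back the representative $\alpha((x_1,y_1),(x_2,y_2))=\alpha_1(x_1,x_2)\alpha_2(y_1,y_2)\phi(x_1,y_2)$ along the swap and evaluating $\Alt$ on the two factors gives $\phi_{\tau^*\alpha}(x,y)=-\phi(y,x)$, so you cannot simply declare the ``condition on $\phi$'' vacuous and project onto $(a,\phi)$. The paper resolves the issue at the chain level, realizing $\tau^*$ by the interchange chain map $f_i\otimes f_j\mapsto(-1)^{ij}f_j\otimes f_i$ on $F\otimes F$ and reading off the action on the three Künneth summands as $(a,b,c)\mapsto(c,-b,a)$, whence $\Ker(1+\tau)=\{(a,b,-a)\}$. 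To complete your proof you must either reproduce that chain-level computation or carry out the coboundary correction you allude to and show explicitly how the flip of arguments is absorbed (and why the resulting kernel is all of $P(G,G;k^\times)$ rather than its symmetric part); as written, the argument stops exactly where the real work begins.
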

    \begin{proof}
    
It follows from \eqref{CyclicCohomology} that  $H^n(\mathbb{Z}/2,H^1(G\times G,k^\times))=0$ for $n\geq 1$. Then, the sequence \eqref{FTES} implies that 

\[\Opext_\lhd(k\mathbb{Z}/2,k^{G\times G})\cong \Der(\mathbb{Z}/2,H^2(G\times G,k^\times)).\]

According to \eqref{Karpilovski}, we have $H^2(G\times G, k^\times)= H^2(G,k^\times)\oplus P(G,G,k^\times)\oplus H^2(G,k^\times)$. If $F$ is a projective $G$-resolution of $\Z$, the chain map \begin{align*}
(F\otimes F)_n:= \bigoplus_{i+j=n} F_i\otimes F_j &\to \bigoplus_{i+j=n} F_i\otimes F_j\\
f_i\otimes f_j & \mapsto (-1)^{ij}f_j\otimes f_i, 
\end{align*} defines a compatible action of $\mathbb{Z}/2$ on $F\otimes F$. Then, the action of $\mathbb{Z}/2$ on $H^2(
G\times G, k^\times)= H^2(G,k^\times)\oplus P(G,G;k^\times)\oplus H^2(G,k^\times)$ is given by \begin{align*}
    \tau: H^2(G\times G, k^\times)&\to H^2(G\times G, k^\times)\\
    (a,b,c) &\mapsto (c,-b,a).
\end{align*}
Since $\Der(\mathbb{Z}/2,H^2(G\times G,k^\times))=\{s\in H^2(G\times G, k^\times): a+\tau(a)=0\},$ we have that \[\Der(\mathbb{Z}/2,H^2(G\times G,k^\times))=\{(a,b,-a):a\in  H^2(G,k^\times), b\in P(G,G;k^\times) \}.\]
In conclusion, we have that $$\Opext_\lhd(k\mathbb{Z}/2,k^{G\times G})\cong H^2(G,k^\times )\oplus P(G,G;k^\times).$$
\end{proof}
 The following example includes the previous one in the case that $b=c=1$, $a=0$.
  \begin{theorem}
Let $\mathbb{Z}/2\ltimes (\mathbb{Z}/n\oplus \mathbb{Z}/n)$ be the semidirect product where the action of $\mathbb{Z}/2$ on $(\mathbb{Z}/n\oplus \mathbb{Z}/n)$ is defined by the matrix  
\[ A=\left( \begin{array}{cc}
a & b\\ 
c & -a \end{array} \right)\]  with $\Det(A)=-1$. Let $k$ be a field such that $k^\times/(k^\times)^{2n}=0$ Then, $$\Opext_\rhd(k\mathbb{Z}/2,k^{\mathbb{Z}/n\oplus \mathbb{Z}/n})\cong \frac{\operatorname{Ker}(A-I)}{\operatorname{Im}(A+I)}
      \oplus \mu_n(k),$$
      where $\mu_n(k)$ is the group of $n^{th}$-roots of unity in $k$.
\end{theorem}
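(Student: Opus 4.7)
The strategy is to apply the five-term exact sequence \eqref{FTES} from Theorem~\ref{FTESKac} to the matched pair with $F=\mathbb{Z}/2$ and $G=\mathbb{Z}/n\oplus\mathbb{Z}/n$. The condition $\Det(A)=-1$ together with the displayed form of $A$ forces $a^2+bc=1$, hence $A^2=I$, so $A$ indeed defines a $\mathbb{Z}/2$-action on $G$. The hypothesis $k^\times=(k^\times)^{2n}$ in particular yields $(k^\times)^n=k^\times$, so the $\operatorname{Ext}$ term in \eqref{UnivCoef} vanishes and the map $\Alt$ of \eqref{AltBilForms} gives $H^2(G,k^\times)\cong\wedge^2\widehat{G}\cong\mu_n(k)$. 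Functorially, the induced action of $\mathbb{Z}/2$ on the rank-one module $\wedge^2\widehat{G}$ is multiplication by $\Det(A^T)=\Det(A)=-1$, so the generator of $\mathbb{Z}/2$ acts on $\mu_n(k)$ by inversion.

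Now the terms in \eqref{FTES} can be computed. Since the $\mathbb{Z}/2$-action on $\mu_n(k)$ is inversion, the derivation compatibility $d(t)\cdot (t\cdot d(t))=1$ holds automatically, so $\Der(\mathbb{Z}/2,H^2(G,k^\times))\cong\mu_n(k)$ via evaluation at the generator $t$. Using the periodic resolution \eqref{CyclicResolution} and formula \eqref{CyclicCohomology}, I obtain $H^2(\mathbb{Z}/2,\widehat{G})\cong \Ker(A-I)/\Img(A+I)$: concretely the dual action on $\widehat{G}\cong(\mathbb{Z}/n)^2$ is by the transpose $A^T$, but Pontryagin duality for Tate cohomology of $\mathbb{Z}/2$ on finite abelian groups identifies $\Ker(A^T-I)/\Img(A^T+I)$ with $\Ker(A-I)/\Img(A+I)$. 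By periodicity, $H^3(\mathbb{Z}/2,\widehat{G})\cong \Ker(A+I)/\Img(A-I)$. Substituting into \eqref{FTES} yields
\begin{equation*}
0\to\frac{\Ker(A-I)}{\Img(A+I)}\to\Opext_\lhd(k\mathbb{Z}/2,k^G)\xrightarrow{\pi}\mu_n(k)\xrightarrow{d_2}\frac{\Ker(A+I)}{\Img(A-I)}.
\end{equation*}

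To conclude, I need $d_2=0$ together with a splitting of the resulting short exact sequence. I plan to obtain both at once by constructing an explicit section $\mu_n(k)\to\Opext_\lhd(k\mathbb{Z}/2,k^G)$. For each $m\in\mu_n(k)$, pick a bicharacter $\chi:G\times G\to k^\times$ with $\Alt(\chi)=m$ and use it to define a 2-cocycle $\tau\in Z^2(G,(k^{\mathbb{Z}/2})^\times)$; the matched-pair compatibility condition of Section~\ref{HopfAlgExt} then determines $\sigma\in Z^2(\mathbb{Z}/2,(k^G)^\times)$ (uniquely up to coboundary), whose existence is guaranteed precisely because the $\mathbb{Z}/2$-action on $\wedge^2\widehat{G}$ is inversion, i.e., $\Det(A)=-1$. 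This will produce the required section.

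The main obstacle I anticipate is the explicit cocycle construction, particularly the verification of the matched-pair compatibility equation for $(\sigma,\tau)$; the identity $\Det(A)=-1$ should play the role of exactly the constraint needed to solve for $\sigma$ given $\tau$. Once the section is in hand, both $d_2=0$ and the splitting of the short exact sequence follow at once, yielding $\Opext_\lhd(k\mathbb{Z}/2,k^G)\cong \Ker(A-I)/\Img(A+I)\oplus\mu_n(k)$.
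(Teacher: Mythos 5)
Your setup follows the paper's proof exactly: the same five-term sequence \eqref{FTES}, the same identifications $H^2(\mathbb{Z}/2,\widehat{G})\cong\Ker(A-I)/\Img(A+I)$ via \eqref{CyclicCohomology} and $\Der(\mathbb{Z}/2,H^2(G,k^\times))\cong\mu_n(k)$ via the fact that $A$ acts on $\wedge^2\widehat{G}$ by $\Det(A)=-1$, and the same plan of killing $d_2$ and splitting the sequence simultaneously by exhibiting a homomorphic section of $\pi$. Up to that point everything you say is correct.

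The gap is that you stop exactly where the real work begins. Showing that each $\zeta\in\mu_n(k)$ lifts to a class in $\Opext_\lhd(k\mathbb{Z}/2,k^G)$ is not a formality that follows from ``$\Det(A)=-1$ is the constraint needed to solve for $\sigma$'' --- that sentence asserts the conclusion rather than proving it. The content of the paper's proof of (iii) is an explicit computation: starting from the representative bicharacter $\alpha_\zeta(x,y)=\zeta^{x_1y_2}$, one computes $\delta_h(\alpha_\zeta)(1,1)(x,y)=\alpha_\zeta(Ax,Ay)\alpha_\zeta(x,y)$, observes that it is the \emph{symmetric} bilinear form with matrix $\left(\begin{smallmatrix}ac&bc\\ bc&-ab\end{smallmatrix}\right)$ (symmetry being where the derivation condition, i.e.\ $\Det(A)=-1$, actually enters), and then lifts it through $\delta_v$ by the explicit quadratic cochain $\gamma(1,1)=\zeta^{-acx^2/2-bcxy+aby^2/2}$ --- a step that uses the hypothesis $k^\times=(k^\times)^{2n}$ to extract the needed square roots of $\zeta$, a point your outline does not touch. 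Only after verifying $\delta_h(\gamma)(1,1,1)=1$ does one get $d_2(\zeta)=1$, and only because $\zeta\mapsto(\alpha_\zeta,\gamma_\zeta)$ is visibly multiplicative in $\zeta$ does the sequence split. Without carrying out this lift and the final check, your argument establishes only the exact sequence
\begin{equation*}
0\to\frac{\Ker(A-I)}{\Img(A+I)}\to\Opext_\lhd(k\mathbb{Z}/2,k^{G})\to\mu_n(k),
\end{equation*}
not the claimed direct sum decomposition.
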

\begin{proof}
In this case the sequence \eqref{FTES} is given by
\begin{equation*}
\begin{aligned}
0{\rightarrow}&H^2(\mathbb{Z}/2,\Der(\mathbb{Z}/n\oplus \mathbb{Z}/n,k^\times))\stackrel{i}{\to}  \Opext_\lhd (k\mathbb{Z}/2,k^{\mathbb{Z}/n\oplus \mathbb{Z}/n})\\&\stackrel{\pi}{\to}\Der(\mathbb{Z}/2,H^2(\mathbb{Z}/n\oplus \mathbb{Z}/n,k^\times))\stackrel{d_2^{0,1}}{\to}H^3(\mathbb{Z}/2,\Der(\mathbb{Z}/n\oplus \mathbb{Z}/n,k^\times))\\ &{\to}H^4(\Sigma,X,A),
\end{aligned}
\end{equation*}

We will see that 
\begin{itemize}
    \item[(i)]  $H^2(\mathbb{Z}/2,H^1(\mathbb{Z}/n\oplus \mathbb{Z}/n,k^\times))\cong \frac{\operatorname{Ker}(A-I)}{\operatorname{Im}(A+I)}$.
    \item[(ii)] $\Der(\mathbb{Z}/2,H^2(\mathbb{Z}/n\oplus \mathbb{Z}/n,k^\times))\cong \mu_n(k)$,
    
    \item[(iii)] $d_2^{0,1}=0$,
    
\end{itemize}
Therefore, $\Opext_\lhd (k\mathbb{Z}/2,k^{\mathbb{Z}/n\oplus \mathbb{Z}/n})$ fits in a short exact sequence
\begin{equation}\label{ShortExactSequence}
0{\rightarrow}\frac{\operatorname{Ker}(A-I)}{\operatorname{Im}(A+I)}\stackrel{i}{\to}\Opext_\lhd (k\mathbb{Z}/2,k^{\mathbb{Z}/n\oplus \mathbb{Z}/n})\stackrel{\pi}{\to}\mu_n(k)\to 0,
\end{equation}
moreover, we will see  \eqref{ShortExactSequence} is split, so 
$$\Opext_\lhd (k\mathbb{Z}/2,k^{\mathbb{Z}/n\oplus \mathbb{Z}/n})\cong \frac{\operatorname{Ker}(A-I)}{\operatorname{Im}(A+I)}\oplus \mu_n(k).$$

(i) It follows immediately from\eqref{CyclicCohomology}.

(ii) We identify $\wedge^2(\mathbb{Z}/n\oplus\mathbb{Z}/n)$ with the abelian group of alternating $2\times 2$-matrices over the ring $\mathbb{Z}/n\mathbb{Z}$. Therefore, $\wedge^2(\mathbb{Z}/n\oplus\mathbb{Z}/n)\cong \mathbb{Z}/n$ and  \[H^2(\mathbb{Z}/n\oplus\mathbb{Z}/n,k^\times)\cong \Hom(\wedge^2(\mathbb{Z}/n\oplus\mathbb{Z}/n),k^\times)\cong \mu_n(k),\]where $\mu_n(k)$ is the group of $n$-th roots  unit. Since  $A^TMA=-M$  for all $M\in \wedge^2(\mathbb{Z}/n\oplus\mathbb{Z}/n)$ we have that \[\Der(\mathbb{Z}/2,H^2(\mathbb{Z}/n\oplus \mathbb{Z}/n,k^\times))\cong \mu_n(k).\]

(iii) To compute \[d_2^{0,1}:\mu_n(k)=\Der(\mathbb{Z}/2,H^2(\mathbb{Z}/n\oplus \mathbb{Z}/n,k^\times))\stackrel{}{\to}H^3(\mathbb{Z}/2,\Der(\mathbb{Z}/n\oplus \mathbb{Z}/n,k^\times)),\] we follow \eqref{Differentials}. Given $\zeta\in \mu_n(k)=\Der(\mathbb{Z}/2,H^2(\mathbb{Z}/n\oplus \mathbb{Z}/n,k^\times))$, we need to find $\gamma\in \Map_+((\mathbb{Z}/n\oplus \mathbb{Z}/n)\times (\mathbb{Z}/2)^{2},k^\times)\cong C^2(\mathbb{Z}/2,C^1(\mathbb{Z}/n\oplus \mathbb{Z}/n,k^\times))$ such that  \begin{equation}\label{DeltaGamma}\delta_h(\alpha_\zeta)=\delta_v(\gamma),\end{equation}where 
\begin{align}\label{alpha-zeta}
    \alpha_\zeta\in C^2(\Z/n\oplus \Z/n, k^\times), && \alpha_\zeta(x,y)=\zeta^{x_1 y_2}
\end{align}
and then compute the cohomology class of $\delta_h(\gamma)$ in $H^3(\mathbb{Z}/2,\Der(\mathbb{Z}/n\oplus \mathbb{Z}/n,k^\times))$.
We have that
\begin{align*}
\delta_h(\alpha_\zeta)(1,1)(x,y) &=\alpha_\zeta(Ax,Ay)\alpha_\zeta(x,y)\\&= \zeta^{x^T\begin{pmatrix}ac&bc\\bc&-ab\end{pmatrix}y}.    
\end{align*}
This is a bicharacter with associated quadratic form  \[\omega(x,y)=\zeta^{-acx^2-2bcxy+aby^2}.\] Therefore, the cochain $\gamma\in C^2(\Z/2,C^1(\Z/n\oplus\Z/n,k^\times ))$  defined by
\begin{equation}\label{gamma}
\gamma(1,1)= \zeta^{-\frac{acx^2}{2}-bcxy+\frac{aby^2}{2}}, \quad  \quad (x,y)\in \Z/n\oplus \Z/n,
\end{equation}
and $\gamma(0,1)=\gamma(1,0)=\gamma(0,0)=1,$ satisfies \eqref{DeltaGamma}.

Finally, the horizontal differential of $\gamma$ is given by
\begin{align*}
\label{Dif3Gamma}\delta_h(\gamma)(1,1,1)&=\gamma(1,1)\gamma(1,0)(^1(\gamma(1,1))\gamma(0,1))^{-1}\\
&=\gamma(1,1)(^1(\gamma(1,1))^{-1}=1. \notag
\end{align*}
Hence, $d_2(\zeta)=1$.

A section of $\pi$ in the exact sequence \eqref{ShortExactSequence} is given by cohomology of $s(\zeta)=(\alpha_\zeta,\gamma_\zeta)$, where $\alpha_\zeta$ and $\gamma_\zeta$ are given by \eqref{alpha-zeta} and \eqref{gamma}, respectively.  It is clear from the definition that $s$ is a group homomorphism, that is, \eqref{ShortExactSequence} splits.
\end{proof}
\begin{theorem}\label{thm:odd groups}
Let $F$ be an arbitrary group acting on a finite abelian group $V$ with odd order. Suppose that  $(k^\times)^n=k^\times$, where $n$ is the exponent of $V$. Then
 \begin{align*}
   \Opext_\lhd (k F,k^{V})\cong  & H^2(F, \widehat{V})
      \oplus \Der(F,\wedge^2 \widehat{V}),
  \end{align*}where $\widehat{V}=\Hom(V,k^\times)$.   
  \end{theorem}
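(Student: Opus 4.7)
The plan is to combine the five-term exact sequence of Theorem \ref{FTESKac} with the description $H^2(V,k^\times) \cong \wedge^2 \widehat{V}$ coming from \eqref{UnivCoef} and \eqref{AltBilForms} (which applies because $(k^\times)^n = k^\times$ forces $\operatorname{Ext}(V,k^\times)=0$). The sequence then reads
\[
0 \to H^2(F,\widehat{V}) \xrightarrow{i} \Opext_\lhd(kF, k^V) \xrightarrow{\pi} \Der(F,\wedge^2\widehat{V}) \xrightarrow{d_2^{0,1}} H^3(F,\widehat{V}) \to \cdots,
\]
and the whole theorem reduces to exhibiting an additive section $s:\Der(F,\wedge^2\widehat{V}) \to \Opext_\lhd(kF,k^V)$ of $\pi$. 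Such a section simultaneously forces $d_2^{0,1}=0$ and splits the short exact sequence.

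The construction of $s$ rests on the following observation, which is exactly where the odd-order hypothesis on $V$ is used. Because $V$ has exponent $n$ and $n$ is odd, the map $\operatorname{Alt}$ of \eqref{AltBilForms} admits a canonical, $F$-equivariant, group-theoretic section
\[
\mu:\wedge^2 \widehat{V} \longrightarrow \operatorname{Bihom}(V\times V, k^\times), \qquad \mu(\psi)(x,y) = \psi(x,y)^{(n+1)/2}.
\]
Indeed, $\mu(\psi)$ is bilinear because $\psi$ is, and $\operatorname{Alt}(\mu(\psi))(x,y) = \psi(x,y)^{(n+1)/2}\psi(y,x)^{-(n+1)/2} = \psi(x,y)^{n+1} = \psi(x,y)$ since $\psi$ is alternating and $n$-torsion. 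Functoriality in $\psi$ gives $\mu(f\cdot\psi) = f\cdot \mu(\psi)$ for the natural $F$-actions, and $\mu$ is a homomorphism of abelian groups in $\psi$.

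Given $\xi \in \Der(F,\wedge^2\widehat{V})$, define $\sigma_\xi \in \Map_+(V^2 \times F, k^\times)$ by $\sigma_\xi(v_1,v_2;f) = \mu(\xi(f))(v_1,v_2)$. Then $\sigma_\xi$ is annihilated by the vertical differential (each $\mu(\xi(f))$ is a bihomomorphism, hence a $V$-cocycle). Using the derivation identity $\xi(f_1 f_2) = \xi(f_1)\cdot(f_1\cdot \xi(f_2))$ in $\wedge^2\widehat{V}$ together with the $F$-equivariance and additivity of $\mu$, a direct check with the horizontal differential formulas from the proof of Theorem \ref{FTESKac} shows that $\delta_h \sigma_\xi = 0$ \emph{on the nose} (no correction 1-cochain is needed; compare the previous theorem, where the analogous correction $\gamma$ was required only because the specific bicharacter $\alpha_\zeta$ was not $F$-equivariantly chosen). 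Consequently $(\sigma_\xi,0) \in \Tot(\Hom_\Sigma(D^{*,*},k^\times))$ is a $1$-cocycle, and $s(\xi) := [(\sigma_\xi,0)]$ is an additive section of $\pi$ by construction.

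The main obstacle I expect is verifying that $\delta_h\sigma_\xi$ vanishes at the cochain level rather than merely modulo vertical coboundaries; this is the step where $F$-equivariance of the lift $\mu$, and hence the odd-order hypothesis on $V$, is indispensable. Without oddness, only an $F$-equivariant lift modulo coboundaries is available, which would yield a (possibly nonzero) class $d_2^{0,1}(\xi) \in H^3(F,\widehat{V})$ and no splitting.
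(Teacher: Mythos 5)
Your proposal is correct and follows essentially the same route as the paper: the paper also lifts a derivation $\alpha\in\Der(F,\wedge^2\widehat{V})$ to $2$-cocycles via the square root $\tilde{\alpha}(g)=\alpha(g)^{1/2}$ (your $\psi^{(n+1)/2}$ is exactly this inverse of the squaring automorphism on the odd-exponent group $\wedge^2\widehat{V}$), and observes that the lifted cochain is already horizontally closed, so the correction $\gamma$ is constant and $d_2=0$. Your write-up is in fact slightly more complete, since you spell out the additivity and $F$-equivariance of the lift and hence the explicit section splitting the sequence, whereas the paper only asserts that the resulting short exact sequence splits.
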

  
 \begin{proof}
By \eqref{UnivCoef}, we have $H^2(V,k^\times)\cong \wedge^2\widehat{V}$. The sequence \eqref{FTES} is given by:\\
  \begin{equation*}
\begin{aligned}
0{\rightarrow}H^2(F,\widehat{V})\stackrel{i}{\to} & \Opext_\lhd(kF,k^V)\stackrel{\pi}{\to}\Der(F,\wedge^2\widehat{V})\\
&\stackrel{d_2}{\to}H^3(F,\widehat{V}){\to}H^4(\Sigma,X,A),
\end{aligned}
\end{equation*}
where $\Sigma= V \rtimes F$. We will see that $d_2=0$ and, since the resulting short exact sequence splits, we get the result.
  
Let $\alpha\in \Der(F,\wedge^2 \widehat{V})$, that is, $\alpha:F\to \wedge^2\widehat{V}$ such that $\alpha(gh)=\ ^g\alpha(h)\alpha(g)$. By \eqref{AltBilForms}, $\alpha$ can be identified with a map $\alpha:F\to H^2(V,k^\times)$ which can be lifted to a map $\tilde{\alpha}:F\to Z^2(V,k^\times)$ considering that, since $V$ has odd order, the map
  \begin{align*}
      \Alt: \wedge^2\widehat{V}&\to \wedge^2\widehat{V}
  \end{align*}
  given by $\Alt(\phi)(x,y)=\frac{\phi(x,y)}{\phi(y,x)}=\phi(x,y)^2$ is an isomorphism, so we can define the lifting map by $\tilde{\alpha}:F\to Z^2(V,k^\times)$ by
\begin{equation*}
  \tilde{\alpha}(g)=\alpha(g)^{\frac{1}{2}}.
  \end{equation*}
  In order to compute $d_2(\alpha)$, we must find a function $\gamma\in C^2(F,C^1(V,k^\times))$ such that
  \begin{align*}
    \delta(\gamma(g,h))&=\frac{^g\widetilde{\alpha}(h)\widetilde{\alpha}(g)}{\widetilde{\alpha}(gh)}\\
    &=\frac{\widetilde{^g\alpha}(h)\widetilde{\alpha}(g)}{\widetilde{\alpha}(gh)}\\
    &=\left(\frac{^g\alpha(h)\alpha(g)}{\alpha(gh)}\right)^\frac{1}{2}=1.
      \end{align*}
Hence $\gamma$ can be taken to be the constant cochain and, therefore, $d_2(\alpha)=1$ for all $\alpha\in \Der(F,\wedge^2\widehat{V})$.
\end{proof} 
  
   \begin{corollary}
	Let $F=C_m=\langle \sigma \rangle $ be a cyclic group of order $m$ acting on a finite abelian group $V$ with odd order. Suppose that  $(k^\times)^n=k^\times$, where $n$ is the exponent of $V$. Then

 \begin{align*}
   \Opext_\lhd (k F,k^{V})\cong  & \{\psi \in \widehat{V}: \sigma \psi=\psi\}/\{N_{\sigma} \psi: \psi \in \widehat{V}\} \\ & \oplus \{b \in \wedge^2 \widehat{V}: N_{\sigma}  b=0 \},
  \end{align*}where $N_\sigma= 1+ \sigma +\cdots +\sigma^{m-1}$.   
\end{corollary}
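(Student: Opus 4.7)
The plan is to simply feed the hypotheses into Theorem \ref{thm:odd groups} and then unwind the two resulting summands using the standard cyclic-group machinery. All the analytical content lives in Theorem \ref{thm:odd groups}; what remains here is bookkeeping with the periodic resolution \eqref{CyclicResolution} and the definition of a derivation.

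First I would verify the hypotheses: $V$ is a finite abelian group of odd order, $(k^\times)^n = k^\times$ for $n$ the exponent of $V$, and $F=C_m$ acts on $V$. These are exactly those of Theorem \ref{thm:odd groups}, which therefore yields
\[
\Opext_\lhd(kC_m,k^V)\;\cong\; H^2(C_m,\widehat V)\;\oplus\;\Der(C_m,\wedge^2\widehat V).
\]
So the corollary reduces to identifying each summand with the group described in the statement.

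For the first summand I would apply \eqref{CyclicCohomology} with $M=\widehat V$ and the generator $\sigma$ of $C_m$: since $H^{2}(C_m,M)=M^{C_m}/\Img(N_\sigma)$, we immediately obtain
\[
H^2(C_m,\widehat V)\;=\;\{\psi\in\widehat V:\sigma\psi=\psi\}\,\big/\,\{N_\sigma\psi:\psi\in\widehat V\},
\]
which is the first direct summand. For the second summand I would use the fact that a derivation $d\colon C_m\to \wedge^2\widehat V$ is determined by its value $b:=d(\sigma)$ via the cocycle identity $d(\sigma^k)=b+\sigma b+\cdots+\sigma^{k-1}b$, and conversely any $b$ defines a well-defined derivation iff the relation $\sigma^m=1$ is respected, i.e.\ $d(\sigma^m)=(1+\sigma+\cdots+\sigma^{m-1})b=N_\sigma b=0$. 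This sets up a bijection
\[
\Der(C_m,\wedge^2\widehat V)\;\cong\;\{b\in\wedge^2\widehat V:N_\sigma b=0\},
\]
which is the second direct summand.

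There is no genuine obstacle here; the only thing to be a little careful about is that the identification of $\Der$ with the kernel of $N_\sigma$ uses both the existence part (every such $b$ extends to a well-defined derivation on the cyclic group) and the uniqueness part (two derivations agreeing on $\sigma$ coincide), which both follow from the normal form $\sigma^0,\sigma,\dots,\sigma^{m-1}$ for elements of $C_m$ and the derivation rule. Combining the two identifications with the splitting from Theorem \ref{thm:odd groups} gives the claimed isomorphism.
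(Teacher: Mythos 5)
Your argument is correct and is exactly the derivation the paper intends: the authors state this corollary without proof as an immediate consequence of Theorem \ref{thm:odd groups}, with $H^2(C_m,\widehat V)$ identified via the periodic resolution \eqref{CyclicCohomology} and $\Der(C_m,\wedge^2\widehat V)\cong\Ker(N_\sigma)$ via $d\mapsto d(\sigma)$. Nothing is missing.
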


\subsection{An example with non-trivial differential $d_2$}
The next example illustrates the fact that the hypothesis in Theorem \ref{thm:odd groups} stating that the order of the order of $V$ must be odd, can not be avoided since otherwise the differential $d_2$ can be not trivial.
\begin{remark}\label{Rmk:matrices alternantes}
\begin{enumerate}[leftmargin=*,label=\rm{(\alph*)}]
\item Let $G$ be an elementary abelian $p$-group of rank $n$. Once a basis of $G$ is fixed, using the isomorphism \eqref{AltBilForms}  we can identify $H^2(G,\mathbb{C}^\times)$ with alternating matrices  over $\mathbb{Z}/p$ . A representative $2$-cocycle $\alpha_M\in H^2(G,\mathbb{C}^\times)$ corresponding to a matrix $M$ is defined by
\begin{equation}\label{2CocycleToAlternatingMatrix}
    \alpha_M(\textbf{x},\textbf{y})=\exp\Big ( \frac{2\pi i}{n} {\bf{x}^T} \widetilde{M} {\bf{y}}\Big ),
\end{equation}
where $\widetilde{M}$ is the upper triangular part of $M$.

\item Let $F=\langle t_1\rangle \oplus \langle t_2\rangle$ be a product of cyclic groups and let $M$ be an left $F$-module. If  $(\mathbb{Z}\stackrel{\epsilon}{\leftarrow}P_i)$ and $(\mathbb{Z}\stackrel{\epsilon}{\leftarrow}P_i')$ are periodic resolutions as in \eqref{CyclicResolution} for the groups $\langle t_1\rangle$ and $\langle t_2\rangle$ respectively, then the total complex $\Tot(P\otimes P')$  is a free $F$-resolution of $\mathbb{Z}$. Therefore, given a $F$-module $M$, we can compute $H^*(F,M)$ as the   cohomology of total complex of
\begin{equation*}\label{MDiagram}
\xymatrix{
&\vdots  &\vdots  &\vdots  \\
&M \ar[r]^{t_1-1}\ar[u]^{t_2-1}   &M
\ar[u]^{t_2-1}\ar[r]^{N_{t_1}}&M\ar[r]^{t_1-1}\ar[u]^{t_2-1}& \cdots\\
&M \ar[r]^{t_1-1} \ar[u]^{N_{t_2}} &M\ar[u]^{N_{t_2}} \ar[r]^{N_{t_1}} &M\ar[u]^{N_{t_2}} \ar[r]^{t_1-1} &\cdots\\
&M\ar[r]^{t_1-1}\ar[u]^{t_2-1} &M\ar[u]^{t_2-1} \ar[r]^{N_{t_1}} &M\ar[u]^{t_2-1} \ar[r]^{t_1-1} &\cdots}
\end{equation*} 
Since we are mainly interested in $H^2(F,M)$, we  write the second and third differentials $\delta_2:M\oplus M\to M\oplus M\oplus M$, $\delta_3:M\oplus M\oplus M\to M\oplus M\oplus M\oplus M$ of the total complex are given by 
\begin{align}
\delta_2(A,B)&=(A+\ ^{g_1}A,\ A-\ ^{g_2}A-(B-\ ^{g_1}B),B+\ ^{g_2}B),\label{eq: diferencial 2 prod cyclic}\\
\delta_3(A,B,C)&=(\ ^{g_1}A-A,\ ^{g_2}A-A+B+\ ^{g_1}B,B+\ ^{g_2}B+\ ^{g_1}C-C,\ ^{g_2}C-C).\label{eq: diferencial 3 prod cyclic}
\end{align}

\end{enumerate}

\end{remark}

\begin{lemma}\label{TwoGroups}
Let $F=\langle t_1,t_2\rangle$, $G=\langle s_1,\ldots,s_4\rangle$ be elementary abelian $2$-groups of rank 2 and 4, respectively. Consider the (right) action of $F$ on $G$ determined by the matrices

\begin{align*}
F_1=\left( \begin{array}{cccc}
1 & 0 & 0 & 0 \\
0 & 1 & 0 & 0\\
1 & 1 & 1 & 0\\
0 & 1 & 0 & 1\end{array} \right),&& F_2=\left( \begin{array}{cccc}
1 & 0 & 0 & 0 \\
0 & 1 & 0 & 0\\
1 & 0 & 1 & 0\\
1 & 1 & 0 & 1\end{array} \right).
\end{align*}
and the induced left action of $F$ on $H^2(G,\mathbb{C}^\times)$. Then,
\begin{enumerate}
\item The group $\Der(F,H^2(G,\mathbb{C}^\times))$ is in correspondence with the set of pairs of matrices
\begin{align*}
    A=\left( \begin{array}{cccc}
0 & 0 & b & c \\
0 & 0 & d & e\\
b & d & 0 & f\\
c & e & f & 0\end{array} \right),&&  B=\left( \begin{array}{cccc}
0 & 0 & b' & c' \\
0 & 0 & d' & e'\\
b' & d' & 0 & f'\\
c' & e' & f' & 0\end{array} \right),
\end{align*} with entries in $\mathbb{Z}/p$, such that


\begin{align}\label{EquationsExample}
    c'+b'+d'&=0\\
    c+d+e&=0\notag\\
    b+e+c'+d'+e&=0.\notag
\end{align}
\item The group $H^2(F,H^1(G,\mathbb{C}^\times))$ is isomorphic to  $\Z/2^2.$
\end{enumerate}
\end{lemma}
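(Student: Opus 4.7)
My plan is to treat the two parts separately, relying on the identifications set up in Remark~\ref{Rmk:matrices alternantes}.

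For part (1), I would first use Remark~\ref{Rmk:matrices alternantes}(a) to identify $H^2(G,\mathbb{C}^\times)$ with the space of alternating $4\times 4$ matrices over $\mathbb{Z}/2$ (in characteristic~2, the symmetric matrices with zero diagonal). The right action of $F$ on $G$ determined by $F_1,F_2$ induces a left action $M\mapsto F_iMF_i^T$ on this space of alternating matrices, directly from the formula~\eqref{2CocycleToAlternatingMatrix}. A derivation $\alpha\colon F\to H^2(G,\mathbb{C}^\times)$ is completely determined by the pair $(A,B)=(\alpha(t_1),\alpha(t_2))$, and the group relations $t_1^2=t_2^2=e$ and $t_1t_2=t_2t_1$ translate, via the derivation property, into the three matrix equations
\begin{align*}
A+F_1AF_1^T&=0,\\
B+F_2BF_2^T&=0,\\
F_1BF_1^T+A+F_2AF_2^T+B&=0.
\end{align*}
I would then expand each product entry-by-entry and solve the resulting linear system over $\mathbb{F}_2$; the vanishing of the $(1,2)$-entries of $A$ and $B$ together with the three equations in~\eqref{EquationsExample} should drop out as exactly the content of these three matrix identities.

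For part (2), I would use the free $F$-resolution of $\mathbb{Z}$ from Remark~\ref{Rmk:matrices alternantes}(b) and compute $H^2(F,\widehat{G})$ as the cohomology of the associated total complex with the explicit differentials~\eqref{eq: diferencial 2 prod cyclic} and~\eqref{eq: diferencial 3 prod cyclic}. Since $\widehat{G}\cong(\mathbb{Z}/2)^4$ is a module over a field of characteristic~2, the norm operator $N_{t_i}=1+t_i$ and the augmentation $t_i-1$ act as the same map $T_i:=F_i^T-I$ on $\widehat{G}$. A direct calculation then shows that $T_i^2=0$ and $T_1T_2=T_2T_1=0$, and that both $T_1$ and $T_2$ have image and kernel equal to the same two-dimensional subspace $\langle e_1,e_2\rangle\subset\widehat{G}$. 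Substituting these properties into $\delta_2$ and $\delta_3$ reduces the computation of $\Ker(\delta_3)/\Img(\delta_2)$ to a dimension count, which yields $\dim_{\mathbb{F}_2}H^2(F,\widehat{G})=2$.

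The main subtlety in both parts is fixing the correct convention for the induced actions on $\widehat{G}$ and $\wedge^2\widehat{G}$: the two natural possibilities $M\mapsto F_i^TMF_i$ and $M\mapsto F_iMF_i^T$ lead to different linear systems, and only one of them produces the form claimed in~\eqref{EquationsExample}. Once this convention is pinned down, the remainder of the argument is a routine—if slightly tedious—exercise in $\mathbb{F}_2$-linear algebra, and the main work is simply keeping the $4\times4$ matrix bookkeeping clean.
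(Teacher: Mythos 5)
Your proposal is correct and follows essentially the same route as the paper: for part (1) you arrive at the same three matrix identities (the paper reads them off as $\ker(\delta_2)$ via \eqref{eq: diferencial 2 prod cyclic}, you derive them from the presentation $t_1^2=t_2^2=e$, $t_1t_2=t_2t_1$, which is the same content), and for part (2) both arguments compute $\Ker(\delta_3)/\Img(\delta_2)$ in the total complex of the two periodic resolutions by an $\mathbb{F}_2$-dimension count $6-4=2$. The only point to settle is the convention you already flag: with the paper's (untransposed) action the common kernel-equals-image subspace is $\langle e_3,e_4\rangle$ rather than $\langle e_1,e_2\rangle$, which changes nothing in the count but does matter for landing on the exact equations \eqref{EquationsExample} in part (1).
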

\begin{proof}
(1) \ By \eqref{eq: diferencial 2 prod cyclic}, elements in $\Der(F,H^2(G,\mathbb{C}^\times))$ are in correspondence with pairs $(A, B)$ of alternanting $4\times 4$-matrices such that     

\begin{align}\label{equationsAB}
    A+F_1AF_1^T=0\notag\\ 
    B+F_2BF_2^T=0\\
    A- F_2AF_2^T-B+F_1BF_1^T=0.\notag
\end{align} 
The system \eqref{equationsAB} is equivalent to \eqref{EquationsExample}.

$(2)$ In order to compute $H^2(F,H^1(G,\mathbb{C}^\times))$ we use the canonical identification $H^1(G,\mathbb{C}^\times)=\operatorname{Hom}(G,\mathbb{C}^\times)\cong G$ as left $F$-modules. By \eqref{eq: diferencial 3 prod cyclic}, we have that $\Ker(\delta_3)$ is in correspondence with $4\times 3$-matrices $S=[n_a,n_b,n_c]$ over $\mathbb{Z}/2$ such that  
\begin{align*}\label{eq:sistema lineal Ker}
    (F_1-\operatorname{I})n_a&=0, &&
(F_2-\operatorname{I})n_c=0,
\\
(\operatorname{I}-F_1)n_c&=(F_2+\operatorname{I})n_{b},&&
(\operatorname{I}+F_1)n_{b}=(\operatorname{I}-F_2)n_a.\notag
\end{align*}
Thus, the space $\Ker(\delta_3)$ corresponds with all $4\times 3$-matrices over $\mathbb{F}_2$ such that $S_{ij}=0$ for $1\leq i\leq 2$, $1\leq j \leq 3$. On the other hand, by \eqref{eq: diferencial 2 prod cyclic} we have 
\[\Img(\delta_2)= \{(l_a+F_1l_a,l_a-l_b+F_1l_b-F_2l_a,l_b+F_2l_b): l_a, l_b\in N\},\]
that is, $\Img(\delta_2)$ is in correspondence with all matrices of the form 

\[\left(\begin{array}{ccc}
0 & 0 & 0  \\
0 & 0 & 0 \\
x_1+x_2 &  x_1+y_1+y_2 & y_1 \\
x_2 &  x_1+x_2+y_2 & y_1+y_2  \end{array}\right )\]where $x_i, y_i \in \mathbb{Z}/2$. Hence $H^2(F,H^1(G,\mathbb{C}^\times))\cong\Z/2^2$.
\end{proof}



\begin{lemma}\label{Example2}
 Let $\Sigma=F\ltimes G$ be a semidirect product and let
\begin{equation}\label{Resol}
\cdots\stackrel{}{\to}R_3\stackrel{}{\to} R_2\stackrel{}{\to}R_1\stackrel{}{\to}R_0,
\end{equation}
be a free resolution of a right $F$-modules $M$. The action of $F$ on $R_i$ can be extended to an action of $\Sigma$ by $r\cdot (f,g)=r\cdot f$. With this action, the sequence \eqref{Resol} turns out to be a relatively $G$-projective resolution of the right $\Sigma$-module $M$.
\end{lemma}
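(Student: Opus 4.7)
The plan is to verify the two defining conditions of a relatively $G$-projective $\Sigma$-resolution: that each $R_i$ is relatively $G$-projective as a right $\Sigma$-module, and that the augmented complex admits a contracting homotopy as $G$-modules.

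For the first condition, since $R_i$ is free over $F$ and direct sums of relatively $G$-projective modules are relatively $G$-projective, it suffices to treat the case $R_i = \mathbb{Z}F$ (extended to $\Sigma$ with trivial $G$-action via $e_f \cdot (f', g') = e_{ff'}$) and to verify criterion (iii) of the definition. Using the right coset representatives $\{(f, e) : f \in F\}$ for $G$ in $\Sigma$, one has $\mathbb{Z}\Sigma = \bigoplus_{f \in F} \mathbb{Z}G \cdot (f, e)$ as a left $\mathbb{Z}G$-module, so that $\mathbb{Z}F \otimes_{\mathbb{Z}G} \mathbb{Z}\Sigma$ has $\mathbb{Z}$-basis $\{e_f \otimes (f', e)\}_{f, f' \in F}$. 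A direct computation with the semidirect product multiplication, using the triviality of the $G$-action on $\mathbb{Z}F$ to absorb $G$-factors across the tensor, gives the right $\Sigma$-action $(e_f \otimes (f', e)) \cdot (f'', g'') = e_f \otimes (f'f'', e)$. This action preserves the decomposition $\mathbb{Z}F \otimes_{\mathbb{Z}G} \mathbb{Z}\Sigma = \bigoplus_{f \in F} M_f$, where $M_f := \spann_{\mathbb{Z}}\{e_f \otimes (f', e) : f' \in F\}$ is a sub-$\Sigma$-module isomorphic to $\mathbb{Z}F$. Hence $\mathbb{Z}F$ is a direct summand of $\mathbb{Z}F \downarrow_G \uparrow^\Sigma$, which establishes the desired relative projectivity; equivalently, the $\Sigma$-equivariant map $\iota(e_f) = e_e \otimes (f, e)$ realizes $\mathbb{Z}F$ as the summand $M_e$.

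For the second condition, the extended $\Sigma$-action makes $G$ act trivially on every $R_i$ and on $M$, so a $G$-linear contracting homotopy of the augmented complex is the same data as a $\mathbb{Z}$-linear one, which the free $F$-resolutions of interest carry (such as the standard bar-type resolutions whose terms and augmented target are all free abelian groups).

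The main subtlety lies in the bookkeeping for the semidirect product action on $\mathbb{Z}F \otimes_{\mathbb{Z}G} \mathbb{Z}\Sigma$: one must write $(f, e)(f', g')$ in the canonical form $g'' \cdot (f'', e)$ with $g'' \in G$ and $f'' \in F$, and then use the trivial $G$-action on $\mathbb{Z}F$ to cancel the factor $g''$ across the tensor. Once this computation is done correctly, the decomposition into $\Sigma$-submodules isomorphic to $\mathbb{Z}F$ is transparent, and both conditions of the definition follow.
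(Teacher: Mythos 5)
The paper offers no argument for this lemma at all (it is stated with a bare \qed), so there is no ``paper proof'' to compare against; your write-up is a correct and essentially complete verification, and it supplies exactly the two checks the definition demands. Your action computation is right: since $\rhd$ is trivial one has $e\lhd f''=e$, so $(f',e)(f'',g'')=(f'f'',g'')\in G\cdot(f'f'',e)$, the $G$-factor is absorbed across $\otimes_{\mathbb{Z}G}$ by the trivial action on $\mathbb{Z}F$, and the summands $M_f\cong\mathbb{Z}F$ are $\Sigma$-stable. A slightly quicker route to the same first condition: because $\rhd$ is trivial, $(f,g)\mapsto f$ is a group homomorphism $\Sigma\to F$, the extended action is its pullback, and $\mathbb{Z}F$ with this action is precisely the permutation module $\mathbb{Z}[G\backslash\Sigma]=\mathbb{Z}\!\downarrow_G\uparrow^\Sigma$; being induced from $G$ it is relatively $G$-projective by criterion (iii), and a free $F$-module is a direct sum of such summands, so no explicit coset bookkeeping is needed. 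The one point to record concerns condition (2): your reduction to a $\mathbb{Z}$-linear contracting homotopy is correct (as $G$ acts trivially on each $R_i$ and on $M$), but such a homotopy exists only if the augmented complex is $\mathbb{Z}$-split. This holds whenever $M$ is free as an abelian group (then every cycle subgroup is a subgroup of a free abelian group, hence free, and every short exact sequence in sight splits over $\mathbb{Z}$), which covers the case $M=\mathbb{Z}$ actually used in the paper; but it can fail for a general right $F$-module $M$ --- already for $G$ trivial and $M=\mathbb{Z}/2$ the map $R_0\to M$ cannot split over $\mathbb{Z}$ because $R_0$ is torsion-free. So your hedge restricting to ``the resolutions of interest'' is not cosmetic: as literally stated for arbitrary $M$, the lemma needs the extra hypothesis that $M$ (equivalently, the augmented complex) is $\mathbb{Z}$-split, and you have correctly identified the precise condition under which the statement is true.
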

\qed

\begin{theorem}Let $F=\langle t_1,t_2\rangle$, $G=\langle s_1,\ldots,s_4\rangle$ be elementary abelian $2$-groups of rank 2 and 4, respectively. Consider the (right) action of $F$ on $G$ determined by the matrices

\begin{align*}
F_1=\left( \begin{array}{cccc}
1 & 0 & 0 & 0 \\
0 & 1 & 0 & 0\\
1 & 1 & 1 & 0\\
0 & 1 & 0 & 1\end{array} \right),&& F_2=\left( \begin{array}{cccc}
1 & 0 & 0 & 0 \\
0 & 1 & 0 & 0\\
1 & 0 & 1 & 0\\
1 & 1 & 0 & 1\end{array} \right).
\end{align*}
Then  $\Opext_\lhd(\mathbb{C}F,\mathbb{C}^G)\cong (\Z/2)^3\oplus (\Z/4)^2$.

\end{theorem}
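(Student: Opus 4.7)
The plan is to apply the five-term exact sequence of Theorem~\ref{FTESKac}, which for a trivial $\rhd$ action specializes to
$$0 \to H^2(F, \widehat{G}) \to \Opext_\lhd(\mathbb{C}F, \mathbb{C}^G) \to \Der(F, H^2(G, \mathbb{C}^\times)) \xrightarrow{d_2} H^3(F, \widehat{G}).$$
By Lemma~\ref{TwoGroups}, we already know $H^2(F, \widehat{G}) \cong (\Z/2)^2$ and that $\Der(F, H^2(G, \mathbb{C}^\times))$ is the $7$-dimensional $\F_2$-vector space of matrix pairs satisfying \eqref{EquationsExample}. Since the claimed target $(\Z/2)^3 \oplus (\Z/4)^2$ has order $2^7$, the differential $d_2$ must have image of order $2^2$, forcing $\Ker(d_2) \cong (\Z/2)^5$. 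My two tasks are therefore (i) to compute $d_2$ explicitly and confirm its rank, and (ii) to resolve the resulting extension $0 \to (\Z/2)^2 \to \Opext_\lhd(\mathbb{C}F, \mathbb{C}^G) \to \Ker(d_2) \to 0$ as a non-split extension producing two $\Z/4$ summands.

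For step (i), I would invoke Lemma~\ref{Example2} to upgrade the tensor product of the two periodic cyclic resolutions \eqref{CyclicResolution} into a relatively $G$-projective $\Sigma$-resolution of $\Z$, while keeping the relatively $F$-projective resolution of Proposition~\ref{prop:resultion relativa matched} on the other side. For each derivation $\alpha$ coded by a matrix pair $(A,B)$, I lift $\alpha$ to a $2$-cocycle $\widetilde{\alpha}$ on $G$ using the explicit formula \eqref{2CocycleToAlternatingMatrix}, solve $\delta_v(\gamma) = \delta_h(\widetilde{\alpha})$ for a cochain $\gamma \in C^2(F, C^1(G, \mathbb{C}^\times))$, and then extract $d_2(\alpha) = [\delta_h(\gamma)] \in H^3(F, \widehat{G})$ following the recipe \eqref{Differentials}. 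The group $H^3(F, \widehat{G})$ itself and the image of $d_2$ inside it are both computed using the Koszul-type total complex from Remark~\ref{Rmk:matrices alternantes}(b); basic $\F_2$-linear algebra on a basis of $\Der(F, H^2(G, \mathbb{C}^\times))$ then pins down $\Ker(d_2)$.

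The main obstacle is step (ii). Both $(\Z/2)^2$ and $\Ker(d_2) \cong (\Z/2)^5$ are elementary abelian, yet the target has exponent $4$, so two directions of $\Ker(d_2)$ must lift to order-$4$ elements of $\Opext_\lhd(\mathbb{C}F, \mathbb{C}^G)$ whose doubles generate the entire subgroup $H^2(F, \widehat{G}) \cong (\Z/2)^2$. To detect this, I would choose explicit $2$-cocycle representatives $(\widetilde{\alpha}, \gamma)$ in the total complex $\Hom_\Sigma(D^{*,*}, \mathbb{C}^\times)$ for a basis of $\Ker(d_2)$, and compute their Baer doublings, reading off the result as a class in $H^2(F, \widehat{G})$ via Lemma~\ref{TwoGroups}(2). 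The two basis elements whose squares land nontrivially yield the $(\Z/4)^2$ factor; the remaining three basis elements, lifting to involutions, yield the $(\Z/2)^3$ factor. Assembling these via the invariant factor decomposition gives $\Opext_\lhd(\mathbb{C}F, \mathbb{C}^G) \cong (\Z/2)^3 \oplus (\Z/4)^2$.
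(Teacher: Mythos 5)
Your proposal follows essentially the same route as the paper: the five-term exact sequence of Theorem \ref{FTESKac}, the explicit computation of $d_2$ via the tensor product of periodic cyclic resolutions (Lemma \ref{Example2}) and the lifting recipe \eqref{Differentials}, the identification $\Ker(d_2)\cong(\Z/2)^5$ by adjoining two extra linear conditions to \eqref{EquationsExample}, and finally the resolution of the extension $0\to(\Z/2)^2\to\Opext_\lhd(\mathbb{C}F,\mathbb{C}^G)\to(\Z/2)^5\to 0$ by exhibiting two explicit lifts of order $4$ (the paper takes the basis directions $c'=1$ and $d'=1$) and an order count. The only caveat is that your outline defers all of the actual cocycle computations, including the verification that the two chosen lifts really have order $4$, which is the step the paper also only asserts "can be verified".
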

\begin{proof}
The five-term exact sequence \eqref{FTES} for this case is
\begin{equation*}
\begin{aligned}
0{\rightarrow}H^2(F,\Der(G,\mathbb{C}^\times))\stackrel{i}{\to} & \Opext_\lhd(\mathbb{C}F,\mathbb{C}^G)\stackrel{\pi}{\to}\Der(F,H^2(G,\mathbb{C}^\times))\\
&\stackrel{d_2}{\to}H^3(F,\Der(G,\mathbb{C}^\times)){\to}H^2(\Tot(M^{*,*})).
\end{aligned}
\end{equation*}
For this computation  the relatively $F$-projective resolution used in Theorem \ref{FirstSecondPage} $(\mathbb{Z}\stackrel{\epsilon_Q}{\leftarrow}Q_i,\delta_i')$ will be as in Proposition \ref{prop:resultion relativa matched}, and the relatevely $G$-projective resolution $(\mathbb{Z}\stackrel{\epsilon_P}{\leftarrow}P_i,\delta_i)$ will be the total complex of the tensor product of the cyclic resolutions for $\langle t_1\rangle$ and $\langle t_2\rangle$, considered as a relatively $G$-projective $\Sigma$-resolution of $\Z$ using Lemma $\ref{Example2}$.

The $0^{th}$-page of the spectral sequence in Theorem \ref{FirstSecondPage} is given by
\[E_0^{i,j}=\Hom_{\Z\Sigma}(P_{i+1}\otimes Q_{j+1},\mathbb{C}^\times)=\Hom_{\Z\Sigma}\left(\bigoplus_{k=1}^{i+2}\Z F\otimes \Z G [G]^{j+1},\mathbb{C}^\times\right)\]

The horizontal and vertical differentials $d_h$ and $d_v$ are induced by the differentials of the resolutions $(\mathbb{Z}\stackrel{\epsilon_P}{\leftarrow}{P_i})$ and $(\mathbb{Z}\stackrel{\epsilon_Q}{\leftarrow}{Q_i})$ respectively.\\ 
Each $\Sigma$-module $\Z F\otimes \Z G[G]^{j+1}$ is free with basis $\{e\otimes[g_1|\cdots|g_{j+1}];e\neq g_1,\cdots g_{j+1}\in N\}$. 
Therefore, an element in $E_0^{i,j}$ is defined by a tuple $(h_1,\ldots,h_{i+2})$ with $h_k\in C^{j+1}(F,\mathbb{C}^\times)$, where $h_k(f_1,\cdots,f_{j+1})=1$ if any of the entries is the identity of $F$. 
Similarly, the differentials $d_0^{k,0}:\Hom_\Sigma(P^{k+1}\otimes Q^1,\mathbb{C}^\times)$ of the $0^{th}$-page, induced by the vertical differentials of the double complex are given by
\[d_0(f)(e\otimes[g_1|g_2])=f(e\otimes(g_1[g_2]-[g_1g_2]+[g_1])).\]
Since we are considering $\mathbb{C}^\times$ as a trivial $\Sigma$-module, the elements in $E_1^{k,0}=\Ker(d_0^{k,0})$ are in correspondence to tuples $(\chi_1,\ldots, \chi_{i+2})$ with $\chi_i \in \widehat{G}$.

First, we will compute $\Ker(d_2)$. By Lemma \ref{TwoGroups}, the group $E_2^{0,1}=\Der(F,H^2(G,\mathbb{C}^\times))$ is in correspondence with pairs $(A, B)$ of alternating $4\times 4$-matrices  satisfying the equations in \eqref{EquationsExample}.
According to Remark \ref{Rmk:matrices alternantes}(a), a representative element for $(A,B)$ in $ E^{0,1}_0=\Hom_\Sigma((\Z F\oplus \Z F)\otimes\Z G[G]^2,\mathbb{C}^\times)$  is defined by $\alpha =(\alpha_A,\alpha_B)$,
where $\alpha_{A}, \alpha_{B}$ are  $2$-cocycles defined in \eqref{2CocycleToAlternatingMatrix}.

By \eqref{Differentials}, we have  that $d_2(A,B)=\overline{d_h(\gamma)}$ were \[\gamma\in E^{1,0}_0= \Hom_\Sigma((\Z F\oplus \Z F\oplus \Z F)\otimes\Z G[G]),\] satisfies $d_h(\alpha_A,\alpha_B)=d_v(\gamma)$.  

By  \eqref{eq: diferencial 2 prod cyclic} we have that 
\begin{align}\label{eq:dAlphadBeta}
d_h(\alpha_A,\alpha_B)=(b_{M_1},b_{M_2},b_{M_3})\in E^{1,1}_0= \Hom_\Sigma((\Z F\oplus \Z F\oplus\Z F)\otimes\Z G[G]^2,\mathbb{C}^\times),    
\end{align}
where $b_{M_i}(x,y)=(-1)^{x^T M_i y}$ and

\begin{align*}
M_1=\widetilde{A}+F_1\widetilde{A}F_1^T=\left( \begin{array}{cccc}
0 & 0 & 0 & 0 \\
0 & 0 & 0 & 0\\
0 & 0 & b+d & d\\
0 & 0 & d & e\end{array} \right)\\\notag
M_2=\tilde{A}-F_2\tilde{A}F_2^T-(\tilde{B}-F_1\tilde{B}F_1^T)=\left( \begin{array}{cccc}
0 & 0 & 0 & 0 \\
0 & 0 & 0 & 0\\
0 & 0 & b+b'+d' & b+d+d'\\
0 & & b+d+d'   & c+e+e'\end{array}\right)\\\notag
M_3=\widetilde{B}+F_2\widetilde{B}F_2^T= \left( \begin{array}{cccc}
0 & 0 & 0 & 0 \\
0 & 0 & 0 & 0\\
0 & 0 &b' & c'\\
0 & 0 & c' & c'+e'\end{array} \right)
. \end{align*}
The cochain $\gamma=(\gamma_{M_1},\gamma_{M_2},\gamma_{M_3})\in E^{1,0}_0= \Hom_\Sigma((\Z F \oplus \Z F\oplus \Z F)\otimes\Z G[G]^2,\mathbb{C}^\times)$ defined by

\begin{align}\label{GammaABC}
    \gamma_{M_1}(\vect{x})&= \exp\Big (-\frac{\pi}{2}\left((b+d)x_3^2+2dx_3x_4+ex_4^2\right)\Big ) ,\notag\\
     \gamma_{M_2}(\vect{x})&= \exp\Big (-\frac{\pi}{2}\left((b+b'+d')x_3^2+2(b+d+d')x_3x_4+(c+e+e')x_4^2\right)\Big ),\notag\\
       \gamma_{M_3}(\vect{x})&=\exp\Big( -\frac{\pi}{2}\left( b'x_3^2+2c'x_3x_4+(c'+e')x_4^2\right)\Big),
\end{align}
satisfies \eqref{eq:dAlphadBeta}. Therefore,

\begin{align*}
\delta_h(\gamma_{M_1},\gamma_{M_2},\gamma_{M_3}) &= 
    \Big ( \frac{^{t_1}\gamma_{M_1}}{\gamma_{M_1}},\ \frac{^{t_2}\gamma_{M_1}\gamma_{M_2}(\ ^{t_1}\gamma_{M_2})}{\gamma_{M_1}},\frac{\gamma_{M_2}(\ ^{t_2}\gamma{M_2})\ ^{t_1}\gamma_{M_3}}{\gamma_{M_3}},\ \frac{^{t_2}\gamma_{M_3}}{\gamma_{M_3}}\Big )\\
    &=(1,\gamma_{M_2}^2,\gamma_{M_2}^2,1)\in \ker (d_1: E_1^{2,0} \to E_1^{3,0}).
\end{align*}

Since $G$ is an elementary abelian 2-group, we will use the canonical identification of $\widehat{G}$ with $G$. Under this identification we have that $\gamma_{M_2}^2=(0,0,b+b'+d',c+e+e')$. 

The pair $(A,B)$ belongs to $\Ker(d_2)$ if and only if $(0,\gamma_{M_2}^2,\gamma_{M_2}^2,0)$ belongs to the image of $d_1:E_1^{1,0}\to E^{2,0}_1$, if and only if there exists $(\mu_A,\mu_B,\mu_C)\in E_1^{1,0}=F^{\times 3}$ such that
$d_h(\mu_A,\mu_B,\mu_C)=(0,\gamma_{M_2}^2,\gamma_{M_2}^2,0)$.
This means that
\begin{align*}
(F_1-I)\mu_A&=0\\
(F_2-I)\mu_C&=0\\
(F_2-I)\mu_A+(F_1+I)\mu_{B}&=\gamma_{M_2}^2\\
(F_1-I)\mu_C+(F_2+I)\mu_{B}&=\gamma_{M_2}^2.
\end{align*}
From this equation we obtain
$b+b'+d'=c+e+e'=0$. Joining these two equations with  \eqref{EquationsExample} we get a system of equation with $5$ free variables, hence $\Ker(d_2)=(\Z/2)^5.$

Hence we have the exact sequence  
\begin{equation}\label{ShortSequence}
0\to H^2(F,G)\stackrel{}{\to}\Opext_\lhd(\mathbb{C}F,\mathbb{C}^G)\stackrel{\pi}{\to} \operatorname{Ker}(d_2)\to 0.\end{equation}


An element in $\Ker(d_2)$ is represented by a pair of matrices A,B as in \eqref{TwoGroups}. Let us assign $c'=1$ and consider the remaining variables zero, and let us call the respective pair of matrices $(A_c', B_c')$. A section of the sequence \eqref{ShortSequence} send $(A_c,B_c)$ to the class of the extension \[(\alpha_c,\gamma_c)\in H^1(\Hom_\Sigma(\Tot (D^{*,*},\mathbb{C}^*))\cong \Opext_\lhd(\mathbb{C}F,\mathbb{C}^G)\] where $\alpha_c$ is the $2$-cocycle associated to $(A_c',B_c')$ and $\gamma_c$ is given according to \ref{GammaABC}, by

\begin{align*}
    \gamma_{M_1}(\vect{x})&= \exp\Big (-\frac{\pi}{2}\left(x_3^2+x_4^2\right)\Big ) ,\notag\\
     \gamma_{M_2}(\vect{x})&= \exp\Big (-\frac{\pi}{2}\left((x_3^2\right)\Big ),\notag\\
       \gamma_{M_3}(\vect{x})&=\exp\Big( -\frac{\pi}{2}\left(x_4^2\right)\Big).
\end{align*}
It can be verified that the class of $(\alpha_c,\gamma_c)$ has order 4 in $\Opext_{\rhd,\lhd}(\mathbb{C}F,\mathbb{C}^G)$. In the same way, if we take the variable $d'$ to be $1$ and consider the remaining  variables null we get an element $(\alpha_d,\gamma_d)$ of order 4. Any other element outside the subgroup $\langle(\alpha_c,\gamma_c),(\alpha_d,\gamma_d)\rangle\cong (\Z/4)^2$ has order $2$, otherwise the order of $\Opext_{\rhd,\lhd}(\mathbb{C}F,\mathbb{C}^G)$ could not be $2^7$. That is why $\Opext_\lhd(\mathbb{C}F,\mathbb{C}^G)\cong (\Z/2)^3\oplus (\Z/4)^2$.
%

\end{proof}

\end{document}